\pgfplotsset{compat=1.14} 
\def\Rnn{\R_{\geq 0}}
\def\K{\mathcal K}
\def\L{\mathcal L}
\def\KL{\K\L}
\def\Kinf{\K_\infty}
\def\Z{\mathbb{Z}}
\def\R{\mathbb{R}}
\def\X{\mathbb{X}}
\newcommand{\Rn}[1][n]{\R^{#1}}
\newcommand{\halmos}{\hfill \blacksquare}
\newcommand{\halmoswhite}{\hfill \square}
\newcommand{\nn}{{\nonumber}}
\newenvironment{DIFnomarkup}{}{}
\newtheorem{theorem}{Theorem}
\newtheorem{corollary}[theorem]{Corollary}
\newtheorem{definition}[theorem]{Definition}
\newtheorem{claim}{Claim}
\newtheorem{remark}[theorem]{Remark}
\newtheorem{example}{Example}
\begin{document}

\title{Convergence Properties for Discrete-time Nonlinear Systems}

\author[D.~N.~Tran]{Duc N.\ Tran}
\address{Duc N.\ Tran and Christopher M.\ Kellett are with the School of Electrical Engineering and Computer Science, The University of Newcastle,\linebreak[4] Callaghan, NSW 2308, Australia.}
\email{DucNgocAnh.Tran@uon.edu.au}
\email{Chris.Kellett@newcastle.edu.au}
\author[B.~S.~R\"uffer]{Bj\"orn S.\ R\"uffer}
\address{Bj\"orn S.\ R\"uffer is with the School of Mathematical and Physical Sciences, The University of Newcastle, Callaghan, NSW 2308, Australia.}
\email{Bjorn.Ruffer@newcastle.edu.au}
\author[C.~M.~Kellett]{Christopher M.\ Kellett}

\begin{abstract}
  Three similar convergence notions are considered. Two of them are the long established notions of convergent dynamics and incremental stability. The other is the more recent notion of contraction analysis. All three convergence notions require that all solutions of a system  converge to each other. In this paper, we investigate the differences between these convergence properties for discrete-time, time-varying nonlinear systems by comparing the properties in pairs and using examples. We also demonstrate a time-varying smooth Lyapunov function characterization for each of these convergence notions, and, with appropriate assumptions, we provide several sufficient conditions to establish relationships between these properties in terms of Lyapunov functions.
\end{abstract}

\maketitle

\section{Introduction}

Convergence properties, namely \emph{incremental stability}~\cite{Yosh66,Ange02-TAC,ZaTa11-TAC}, \emph{convergent  dynamics}~\cite{Demidovich67,Pavlov02,PPWH04_SCL}, and \emph{contraction analysis}~\cite{LoSl98_AUT}, are established methods to characterize the asymptotic behavior of one solution with respect to any other solution of a (nonlinear) dynamical system. In particular, all convergence properties impose conditions that all solutions ``forget" their initial conditions and converge to each other. This is a highly desirable property in solving problems in nonlinear output stabilization and regulation \cite{Pavlov02,ZaTa11-TAC}, synchronization \cite{PhSl07_NN,CaCh06_PSMA,StSe07_TAC,Ange02-TAC}, observer design \cite{Ange02-TAC,LoSl98_AUT}, steady-state and frequency response analysis of nonlinear systems \cite{PaWo12_TAC,PWN07_TAC,PaWo08_ACC}, and others.
These three notions of convergence are similar and largely related to each other. However, the three methods were all derived independently, were motivated distinctly, and employ different tool sets. As a consequence, their mutual relationships are, in general, not fully understood even for continuous-time nonlinear systems in the literature. Thus, the purpose of this study is to investigate explicit differences between these properties.

Incremental stability describes the asymptotic property of differences between any two solutions.  Specifically, an augmented system is formed by two ``copies" of the original system. Then, global asymptotic stability of a special closed set (called the diagonal) with respect to the augmented system can be demonstrated to be equivalent to incremental stability of the original system. Thus, a Lyapunov characterization for incremental stability of the original system can be derived from a classical Lyapunov characterization of global asymptotic stability.  Historically, the origin of incremental stability goes back to works on forced oscillators by German mathematicians Trefftz \cite{trefftz1926-zu-den-grundlagen-der-schwingungstheorie} and Reissig \cite{reissig1955-zur-theorie-der-erzwungenen-schwingungen,reissig1955-uber-eine-nichtlineare-differentialgleichung-2.-ordnung,reissig1954-erzwungene-schwingungen-mit-zaher-und-trockener-reibung-erganzung}, without the property being named. It was LaSalle who in \cite{lasalle1957-a-study-of-synchronous-asymptotic-stability} attributes the term ``extreme stability'' to Reissig's work \cite{reissig1955-zur-theorie-der-erzwungenen-schwingungen}. Subsequently, the work of Yoshizawa \cite[Chapter IV]{Yosh66} made significant contributions to the study of extreme stability. In particular, for continuous-time, time-varying systems, a continuous Lyapunov function characterization of extreme stability was demonstrated\cite[Theorem~21.1]{Yosh66}. Similar results are found in \cite[Theorem~1]{Ange02-TAC} and \cite[Theorem~5]{RWM13-SCL} for time-invariant and time-varying systems, respectively, where in both works the property is termed incremental stability. Incremental stability for input-output operators is presented in \cite{FMN96-TAC}, and in \cite{fromionscorletti2002-the-behavior-of-incrementally-stable-discrete-time-systems} conclusions about (asymptotic) stability of equilibria and periodic solutions are drawn from linear incremental gain properties.  Incremental Input-to-State Stability \cite{Ange02-TAC} is an extension of incremental stability to systems with input using the Input-to-State Stability (ISS) framework~\cite{Sont89-TAC}.

Convergent dynamics, on the other hand, requires the existence of a unique and asymptotically stable reference solution that is bounded for all (backward and forward) time. However, a~priori knowledge of this reference solution is not necessarily required.  Every other solution, then, converges to this reference solution asymptotically. A convenient, sometimes easy to check, sufficient condition of convergent dynamics is the Demidovi{\v{c}} condition. Essentially, using a quadratic Lyapunov function, exponential convergence of all solutions to a reference solution can be demonstrated (see, for instance, \cite{PPWH04_SCL}). The existence and boundedness of the reference solution usually relies on Yakubovich's Lemma; i.e., that a compact and positively invariant set contains at least one bounded solution defined for all times. This result was demonstrated in \cite[Lemma 2]{Yaku64_ARC} based on ideas of Demidovi{\v{c}} in \cite{demidovic1961-on-the-dissipativity-of-a-certain-non-linear-system-of-differential-equations.-i, demidovic1962-on-the-dissipative-character-of-a-certain-non-linear-system-of-differential-equations.-ii}. Convergent dynamics (or convergent systems) was pioneered by Demidovi{\v{c}} \cite{Demidovich67} (only available in Russian), see also a historical perspective of developments for convergent dynamics in \cite{PPWH04_SCL}. Results on the convergent dynamics property for periodic systems are discussed in \cite{Pliss66}. A converse Lyapunov theorem for globally convergent systems was introduced in \cite[Theorem~7]{RWM13-SCL}. More details and further extensions of convergent dynamics to systems with input (input-to-state convergence) and applications to output regulation problems can be found in \cite{Pavlov02}.

Lastly, contraction analysis \cite{LoSl98_AUT} was inspired by fluid mechanics and differential geometry. Contraction analysis utilizes local analysis of the linearization along every trajectory to establish globally convergent behavior. This method, in essence, generalizes linear eigenvalue analysis to nonlinear systems. As a result, contraction analysis provides a characterization for exponential convergence of one solution with respect to any other solution. The contraction analysis approach, thus, independently extends the Demidovi{\v{c}} sufficient condition to a necessary and sufficient condition.  (Yet another approach, dating back at least to the 1950s, that allows linearizing nonlinear finite-dimensional systems into infinite-dimensional linear systems is via the Koopman operator~\cite{LaMe13-PDNP}---the adjoint of the transfer or Frobenius-Perron operator.)  Contraction analysis (or contraction metrics) was first introduced in \cite{LoSl98_AUT}. Since then, developments based on differential geometry have been made in \cite{FoSe12_TAC}, \cite{PoBu14_SCL} including converse theorems on contraction metrics for an equilibrium in \cite{Gies15-JMAA}. A historical perspective of contraction analysis, including earlier closely related concepts, is presented in \cite{Jouf05_CDC}.  Various applications of contraction analysis can be found in \cite{APS08_Aut,PhSl07_NN}.

In the continuous-time setting, several relationships have been established between these properties. Available comparisons include convergent dynamics versus incremental stability \cite{RWM13-SCL} and extending contraction analysis and incremental stability to the same differential geometric framework~\cite{FoSe12_TAC}. In \cite{fromionscorletti2005-connecting-nonlinear-incremental-lyapunov-stability-with-the-linearizations-lyapunov-stability} it is shown that exponential incremental stability is equivalent to exponential (asymptotic) stability of one and of all trajectories of a system, and how this relates to the linearization of the system, assuming the state space is closed and convex.

In this paper, we study and compare the three previously discussed convergence properties for time-varying nonlinear systems, albeit, in contrast to most of the existing literature, in discrete-time. In particular, we show that convergent dynamics and incremental stability are two distinct properties. Similarly, convergent dynamics and contraction analysis are two distinct properties. However, contraction analysis is a special case of incremental stability, namely, exponential incremental stability. Furthermore, we establish various sufficient conditions to demonstrate relationships between the three convergence properties. This is done by either tightening the regularity requirements on the dynamics or by adding an assumption on the state space. We also provide time-dependent smooth Lyapunov function characterizations for each of the properties. Furthermore, we present discrete-time analogues of the Demidovi{\v{c}} condition for incremental stability (similar to the continuous-time result \cite[Equation 15]{RWM13-SCL}), convergent dynamics (similar to the continuous-time result \cite[Theorem~1]{PPWH04_SCL}), and contraction analysis.

To derive the three Lyapunov function characterizations, we rely heavily on a converse Lyapunov result for time-varying systems with an asymptotically stable set  \cite[Theorem~1]{JiWa02-SCL} and a stronger version of this theorem for time-varying systems with an exponentially stable set \cite[Theorem~2]{JiWa02-SCL}. 
Specifically,  we use a result \cite[Lemma 2.3]{Ange02-TAC} to convert global asymptotic (or exponential) incremental stability to global asymptotic (or exponential) stability of a closed set with respect to an augmented system. For convergent dynamics, we first apply a nonlinear change of coordinates with respect to the reference solution, then we use a standard
Lyapunov result \cite[Theorem~1]{JiWa02-SCL} (similar to \cite[Theorem~23]{Mass56-AM} for the continuous-time case) to obtain the desired characterization. Lastly, we demonstrate that contraction analysis, in fact, is equivalent to exponential incremental stability, and hence, is equivalent to the existence of an exponential incremental Lyapunov function.

To demonstrate the differences between the three considered convergence notions, we construct examples of systems that individually satisfy one of the three properties but not the others. Specifically, we exploit the fact that global
incremental stability (any two trajectories tend to each other asymptotically)
is a stronger property than global asymptotic convergence to a single trajectory. From there, we are able to construct a system (Example~\ref{example:1}) showing that convergent dynamics does not imply incremental stability. On the other hand, we exploit another fact of convergent systems: that there must exist at least one bounded solution. This is not necessarily true for asymptotically incrementally stable systems. Thus, we show that incremental stability does not imply convergent dynamics (Example~\ref{example:2}).

In contrast to the other two properties, contraction analysis explicitly requires continuously differentiable dynamics. 
Given a system with differentiable right hand side, we are able to show that contraction analysis is equivalent to exponential incremental stability (Theorem~\ref{equivalent contraction vs incremental}). As a consequence, contraction analysis does not imply convergent dynamics. Finally, we employ the fact that the convergence rate in contraction analysis is always exponential. This is not necessarily true for convergent systems (Example~\ref{example:3}). Therefore, contraction analysis and convergent dynamics are distinct convergence notions.

The paper is organized as follows: the necessary technical
assumptions, notational conventions, and definitions of convergence
properties are provided in Section~\ref{sec:pre}. Then, Lyapunov
function characterizations for these notions are presented in
Section~\ref{incre compare sec: Lyapunov}.  In Section~\ref{incre
  compare sec:comparison}, comparisons in pairs for the three
properties are presented together with various examples and sufficient
conditions.  Conclusions and future research indications are provided
in Section~\ref{incre compare sec:Conc}. 
Several proofs are collected in the appendix.

\section{preliminaries}\label{sec:pre}
Let $\Z$ denote the set of integers and $\Z_{\geq k_{0}}$ the set of
integers greater or equal to $k_{0}$.  We consider discrete-time
nonlinear time-varying systems described by the difference equation
\begin{equation}
  x(k+1)=f(k,x(k)),\quad  x(k)\in\R^n,k\in\mathbb{Z},  \label{system no input}
\end{equation}
where $f\colon \Z\times\R^n\rightarrow\R^n$ is continuous and $x(k_0)=\xi\in\R^n$ for $k_0\in\Z$.  For any $k_{0}\in\Z$ and $\xi\in\R^n$, the solution of system~\eqref{system no input} is a function $\phi\colon  \Z_{\geq k_{0}}\rightarrow\R^n$, parameterized by initial state and time, and satisfying $\phi(k_0;k_0,\xi)=\xi$ and equation~\eqref{system no input}; i.e., $\phi(k+1;k_0,\xi)=f(k,\phi(k;k_0,\xi))$ for all $k,k_0\in\Z$ such that $k\geq k_0$.  We use standard comparison function classes\footnote[1]{Recall that $\alpha\colon  \mathbb{R}_{\geq 0}\rightarrow\mathbb{R}_{\geq 0}$ is of class-$\mathcal{K} $ if it is continuous, zero at zero, and strictly increasing. If $\alpha\in \mathcal{K} $ is unbounded, it is of class-$\mathcal{K_\infty}$. A function $\sigma\colon  \mathbb{R}_{\geq 0}\rightarrow\mathbb{R}_{\geq 0}$ is of class-$\mathcal{L}$ if it is continuous, strictly decreasing, and $\lim_{t\rightarrow\infty} \sigma(t) =0$. A function $\beta\colon  \mathbb{R}_{\geq 0}\times\mathbb{R}_{\geq 0}\rightarrow\mathbb{R}_{\geq 0}$ is of class-$\mathcal{K}\mathcal{L}$ if it is class-$\mathcal{K}$ in its first argument and class-$\mathcal{L}$ in its second argument. By convention, $\beta \in\mathcal{K}\mathcal{L}$ satisfies $\beta(0,t)=0$ for all $t\in\R_{\geq 0}$.}
$\mathcal{K}$, $\mathcal{L}$, $\mathcal{K_\infty}$, and $\mathcal{K}\mathcal{L}$ (see \cite{Kell14-MCSS}).  For a vector $y \in \mathbb{R}^n$, a matrix $P\in\R^{n\times n}$, and a positive definite matrix $Q$ we denote the Euclidean norm by $|y|$, the induced (matrix) norm by $\displaystyle\|P\|=\max_{x\in\R^n,~x\neq 0}\frac{|Px|}{|x|}$, and the induced norm of $y$ with respect to matrix $Q$ by $\|y\|_Q=\sqrt{y^TQy}$.
Given two symmetric matrices $A$ and $B$ in $\R^{n\times n}$ we write $A\preceq B$ if for all $x\in\R^{n}$, $x^{T}Ax\leq x^{T}Bx$. 
We say that a set $\X\subseteq\R^n$ is positively invariant under~\eqref{system no input} if for all $\xi\in\X$ and $k,k_0\in \Z$ such that $k\geq k_0$, we have $\phi(k;k_0,\xi)\in\X$.

In the following subsections, we recall the definitions of incremental stability, convergent dynamics, and contraction analysis.
\subsection{Incremental Stability}

The following definition of asymptotic incremental stability is a discrete-time analogue to that in \cite[Definition 2.1]{Ange02-TAC}.

\begin{definition}\label{incremental stable def} System~\eqref{system no input} is \emph{uniformly asymptotically incrementally stable} in a positively invariant set $\X \subseteq \R^n$ if there exists $\beta \in \mathcal{K}\mathcal{L}$ such that
  \begin{equation}
    |\phi(k;k_0,\xi_1)-\phi(k;k_0,\xi_2)|\leq \beta(|\xi_1-\xi_2|,k-k_0),\label{incremental stable}
  \end{equation}
  holds for all $\xi_1,\xi_2\in\X$ and $k,k_0\in \Z$ such that $k\geq k_0$. In case $\X=\R^n$, we say that system~\eqref{system no input} is \emph{uniformly globally asymptotically incrementally stable}.
\end{definition}

As is standard, \emph{uniform} here refers to the fact that the bound in \eqref{incremental stable} depends only on the elapsed time $k-k_0$. A strictly stronger property requires an exponential rate of convergence.

\begin{definition}\label{exponential incremental stable def} System~\eqref{system no input} is  \emph{uniformly exponentially incrementally  stable} in a positively invariant set $\X \subseteq \R^n$ if there exist $\kappa \geq 1$ and $\lambda>1$ such that
  \begin{align}
    |\phi(k;k_0,\xi_1)-\phi(k;k_0,\xi_2)| \leq \kappa|\xi_1 -\xi_2|\lambda^{- (k-k_0)}\label{exponential incremental stability}
  \end{align}	
  holds for all $\xi_1,\xi_2\in\X$ and $k,k_0\in \Z$ such that $k\geq k_0$. In case $\X=\R^n$, we say that system~\eqref{system no input} is \emph{uniformly globally exponentially incrementally stable}.
\end{definition}	

\subsection{Convergent Dynamics}
The following definition of convergent dynamics for discrete-time systems is recalled from \cite[Definition 1]{PaWo12_TAC}. This definition is a discrete-time analogue to the continuous-time definition in \cite[Definition 1]{PPWH04_SCL}. 

\begin{definition}\label{convergent dynamics def}
  System~\eqref{system no input} is \emph{uniformly convergent} in a positively invariant set $\X \subseteq \R^n$ if 

  \begin{enumerate}
  \item there exists a unique bounded solution $\bar{x}(k)$ of system~\eqref{system no input} defined in $\X$ for all $k\in \Z$;  
  \item there exists a function $\beta \in \mathcal{K}\mathcal{L}$ such that, 
    for all $\xi \in  \X$, $k,k_0\in \Z$ with $k\geq k_0$, we have
    \begin{equation}
      |\phi(k;k_0,\xi)-\bar{x}(k)|\leq \beta(|\xi-\bar{x}(k_0)|,k-k_0).\label{convergent system}
    \end{equation}
  \end{enumerate}
  In case $\X=\R^n$, we say that system~\eqref{system no input} is \emph{uniformly globally convergent}. As in the previous definition, we say
  system~\eqref{system no input} is uniformly (globally) \emph{exponentially} convergent if the $\KL$-function $\beta$ can be chosen to
  be $\beta(s,k-k_{0})=\kappa s\lambda^{-(k-k_{0})}$ for some $\kappa\geq 1$ and $\lambda>1$.
\end{definition}

\begin{remark}
  In continuous-time, the definition of convergent dynamics \cite[Definition 1]{PPWH04_SCL} requires an additional condition that system~\eqref{system no input} is forward complete. In discrete-time, such a condition is not required because, by definition, the range of $f$ is $\R^{n}$.
\end{remark}

\begin{remark}\label{unique of bar x}
  In fact, condition $2)$ and the boundedness of $\bar x(k)$ in Definition~\ref{convergent dynamics def} also imply that $\bar x(k)$ is unique. Indeed, let $\hat x(k)$ be another solution defined and bounded for all $k\in\Z$, then $|\hat x(k_0)-\bar x(k_0)|$ is bounded for all $k_0\in\Z$. Since~\eqref{convergent system} holds for all solutions, taking the limit for $k_0 \rightarrow -\infty$ in~\eqref{convergent system}, it follows that $|\hat x(k)-\bar x(k)|\leq 0$ for all $k\in\Z$. Hence, $\hat x\equiv \bar x$.
\end{remark}

\subsection{Contraction Analysis}
Unlike the previous two properties, contraction analysis explicitly requires the dynamics of the considered system to be continuously differentiable. As a consequence, in this subsection, we make a standing assumption that the mapping $f$ (or the right hand side) of system~\eqref{system no input} is continuously differentiable in $x$ on $\R^n$.

We are now ready to state the formal definition (following \cite{LoSl98_AUT}) of contraction analysis.
\begin{definition}
  \label{def:contraction}
  Suppose the right hand side of system~\eqref{system no input} is continuously differentiable in $x$ on a positively invariant set $\X \subseteq \R^n$. Then system~\eqref{system no input} is \emph{uniformly contracting} in $\X$
  if there exist a nonsingular-matrix-valued function $\Theta\colon  \Z\times\X \rightarrow \R^{n\times n}$ and constants $\mu, \eta,\rho >0$ such that, for all $x \in  \X, k\in \Z$, we have
  \begin{align}
    &\eta I \preceq \Theta(k,x)^T\Theta(k,x)\preceq \rho I, \label{bounds on Theta}\\
    &F(k,x)^TF(k,x)-I\preceq-\mu I, \label{contraction condition}
  \end{align}
  where the matrix $F(k,x)$ is given by
  \begin{align}
    F(k,x)=\Theta(k+1,x)\frac{\partial f}{\partial x}(k,x) \Theta (k,x)^{-1}.\label{define Jacobian matrix}
  \end{align}
  In case $\X=\R^n$, we say that system~\eqref{system no input} is \emph{uniformly globally contracting}.
\end{definition}
\begin{remark}
  While the previous definitions for convergence properties are all
  trajectory-based, Definition~\ref{def:contraction} is presented in a
  manner consistent with the literature in terms of the existence of
  the matrix-valued function $\Theta$.  As we will demonstrate in
  Theorem~\ref{equivalent contraction vs incremental} in the sequel,
  contraction analysis is equivalent to uniform exponential
  incremental stability.  Hence, an alternate definition of
  contraction analysis involves a simple nomenclature change in
  Definition~\ref{exponential incremental stable def}.
\end{remark}
\begin{remark}
  Note that~\cite[Definition 3]{LoSl98_AUT}
  includes~\eqref{contraction condition} but not explicitly both
  bounds in~\eqref{bounds on Theta}.  The lower bound in \eqref{bounds
    on Theta} is required to guarantee contraction as the following
  example illustrates. The upper bound guarantees uniformity of the
  contraction property with respect to initial time, which is
  generally desirable.  Consider the system
  \begin{align}
    x(k+1) = (k^2+1)x(k), \quad x(k)\in\R^n, k\in\Z. \label{counterexample contraction}
  \end{align}
  Choose $\Theta(k,x)\coloneqq \frac{1}{k^2+1}$, then
  $F(k,x)=\frac{1}{k^2+2k+2}$ and, thus,\linebreak[4]
  $F(k,x)^TF(k,x) =\frac{1}{(k^2+2k+2)^2} <\frac{1}{4}$; i.e.,
  condition~\eqref{contraction condition} is satisfied.
  However, the origin is an equilibrium and is unstable by inspection.
  Thus, this system is not uniformly globally
  contracting. Note that for this particular choice of $\Theta(k,x)$,
  the lower bound of condition~\eqref{bounds on Theta} is not
  satisfied.
\end{remark}

Given~\eqref{system no input}, similar to \cite{FoSe12_TAC}, we define the variational system 
\begin{subequations}
  \label{eq:tot}
  \begin{empheq}[left={}\empheqlbrace]{align}
    x(k+1)&=f(k,x(k)),\label{original dynamics}\\
    x_\delta(k+1)&= \frac{\partial f}{\partial x} (k,x(k)) x_\delta(k).\label{displacement dynamics}
  \end{empheq}
\end{subequations}
As in \cite{LoSl98_AUT}, we refer to~\eqref{displacement dynamics} as the displacement dynamics where $x_\delta$ is called a displacement of $x$ and is sometimes denoted by $\delta x$ in the literature.

\section{Lyapunov function characterizations}\label{incre compare sec: Lyapunov}
In this section, we present time-varying Lyapunov function characterizations for the three considered convergence properties.

\subsection{Incremental Stability}
A Lyapunov function characterization of incremental stability for continuous-time systems was first presented in \cite[Theorem~21.1 \& Theorem 21.2]{Yosh66}. Subsequently, similar results for time-invariant and time-varying systems were given in \cite[Theorem~1]{Ange02-TAC} and \cite[Theorem~5]{RWM13-SCL}, respectively. The following theorem is a discrete-time analogue to \cite[Theorem~5]{RWM13-SCL}. By a smooth function we mean one that is infinitely often differentiable.

\begin{theorem}\label{incre Lyapunov characterization}
  System~\eqref{system no input} is uniformly globally asymptotically incrementally stable if and only if there exist a smooth function $V\colon  \Z\times\R^n\times\R^n\rightarrow\R_{\geq 0}$,  functions $\alpha_1,\alpha_2 \in \Kinf$, and $\alpha_{3}\colon\Rnn\to\Rnn$ positive definite, such that
  \begin{align}
    \alpha_1(|x_1-x_2|) \leq V(k,x_1,x_2)&\leq \alpha_2(|x_1-x_2|),\label{inceremental Lyapunov function 1}\\
    V\Big(k+1,f(k,x_1),f(k,x_2)\Big)-V(k,x_1,x_2)
    &\leq -\alpha_3(|x_1-x_2|)
          \label{inceremental Lyapunov function 2}
  \end{align}
  hold for all $x_1,x_2 \in \mathbb{R}^n$ and $k\in \Z$. 
\end{theorem}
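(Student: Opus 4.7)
The plan is to prove the theorem in two directions: sufficiency by a standard discrete-time comparison argument on $V$ along solutions, and necessity by converting the incremental stability requirement into uniform global asymptotic stability of a closed set for an augmented system, then invoking the Jiang--Wang converse Lyapunov theorem \cite[Theorem~1]{JiWa02-SCL}.

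For sufficiency, assume a smooth $V$ satisfying \eqref{inceremental Lyapunov function 1} and \eqref{inceremental Lyapunov function 2} exists. Evaluating along any two solutions $\phi(\cdot;k_0,\xi_1)$ and $\phi(\cdot;k_0,\xi_2)$, the decrease condition \eqref{inceremental Lyapunov function 2} together with the upper bound in \eqref{inceremental Lyapunov function 1} gives a recursion of the form $V_{k+1}\leq V_k-\alpha_3\!\circ\!\alpha_2^{-1}(V_k)$, where $V_k\coloneqq V(k,\phi(k;k_0,\xi_1),\phi(k;k_0,\xi_2))$. A standard discrete-time comparison lemma (see e.g.\ \cite{Kell14-MCSS}) produces a $\mathcal{KL}$-function $\sigma$ with $V_k\leq\sigma(V_{k_0},k-k_0)$. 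Composing with the bounds in \eqref{inceremental Lyapunov function 1} yields $|\phi(k;k_0,\xi_1)-\phi(k;k_0,\xi_2)|\leq\alpha_1^{-1}(\sigma(\alpha_2(|\xi_1-\xi_2|),k-k_0))$, which is of the form \eqref{incremental stable} for an appropriate $\beta\in\mathcal{KL}$.

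For necessity, I would form the augmented system on $\R^{2n}$,
\begin{equation*}
  z(k+1)=F(k,z(k)),\qquad z=(x_1,x_2),\quad F(k,z)=\bigl(f(k,x_1),f(k,x_2)\bigr),
\end{equation*}
and introduce the closed set $\mathcal{D}=\{(x,x):x\in\R^n\}\subset\R^{2n}$, which is positively invariant under $F$. The point-to-set distance satisfies $|z|_{\mathcal{D}}=\tfrac{1}{\sqrt{2}}|x_1-x_2|$, so inequality \eqref{incremental stable} is precisely the statement that $\mathcal{D}$ is uniformly globally asymptotically stable for the augmented system in the sense of \cite[Lemma~2.3]{Ange02-TAC}. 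Since $F$ inherits continuity from $f$, the Jiang--Wang discrete-time converse theorem \cite[Theorem~1]{JiWa02-SCL} yields a smooth $W\colon\Z\times\R^{2n}\to\Rnn$ and $\mathcal{K}_\infty$-functions $\tilde\alpha_1,\tilde\alpha_2,\tilde\alpha_3$ with $\tilde\alpha_1(|z|_{\mathcal{D}})\leq W(k,z)\leq\tilde\alpha_2(|z|_{\mathcal{D}})$ and $W(k+1,F(k,z))-W(k,z)\leq-\tilde\alpha_3(|z|_{\mathcal{D}})$. Setting $V(k,x_1,x_2)\coloneqq W(k,(x_1,x_2))$ and absorbing the factor $1/\sqrt{2}$ into the $\mathcal{K}_\infty$-functions gives the desired smooth $V$ with $\alpha_i(s)\coloneqq\tilde\alpha_i(s/\sqrt{2})$.

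The main obstacle is verifying that the hypotheses of \cite[Theorem~1]{JiWa02-SCL} apply to the augmented system and the (unbounded) closed set $\mathcal{D}$; in particular, one must ensure that the set-distance formulation of UGAS used in that theorem matches the $\KL$-estimate inherited from \eqref{incremental stable}. This is exactly the purpose of \cite[Lemma~2.3]{Ange02-TAC}, which I would cite to bridge the two formulations. The remaining bookkeeping—propagating smoothness of $W$ to $V$, and translating the $\tilde\alpha_i$-bounds in terms of $|z|_\mathcal{D}$ to $\alpha_i$-bounds in terms of $|x_1-x_2|$—is routine.
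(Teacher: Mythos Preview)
Your proposal is correct and follows essentially the same approach as the paper: both reduce the problem to uniform global asymptotic stability of the diagonal set $\mathcal{D}$ for the augmented system $z(k+1)=F(k,z(k))$ and invoke \cite[Theorem~1]{JiWa02-SCL}, using the identity $|z|_{\mathcal{D}}=\tfrac{1}{\sqrt{2}}|x_1-x_2|$ to translate between the two formulations. The only minor difference is that the paper applies the Jiang--Wang theorem in both directions (since it is an ``if and only if'' characterization), whereas you handle sufficiency by a direct comparison argument on $V$ along solutions; both routes are standard and equally valid.
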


We remark that, in accordance with~\cite[Lemma~2.8]{JiWa02-SCL}, the function $\alpha_{3}$ provided by the above result may be assumed to be of class~$\Kinf$ without loss of generality.

The proof of Theorem~\ref{incre Lyapunov characterization} is contained in the appendix. 
A function $V$ satisfying~\eqref{inceremental Lyapunov function 1}--\eqref{inceremental Lyapunov function 2} is called an incremental stability Lyapunov function.

\subsection{Convergent Dynamics}
The following theorem is a time-varying Lyapunov function characterization of discrete-time globally convergent systems. This result is analogous to a continuous-time result presented in \cite[Theorem~7]{RWM13-SCL}.
\begin{theorem}\label{conv Lyapunov characterization}
  Assume that system~\eqref{system no input} is uniformly globally convergent. Then, there exist a smooth function $V\colon  \Z\times\mathbb{R}^n\rightarrow\mathbb{R}_{\geq 0}$, a constant $c\geq 0$, and functions $\alpha_1,\alpha_2, \alpha_3 \in \mathcal{K_\infty}$ such that
  \begin{align}
    \alpha_1(|x-\bar x(k)|) \leq V(k,x)&\leq \alpha_2(|x-\bar x(k)|),\label{convergent Lyapunov function 1}\\
    V\Big(k+1,f(k,x)\Big)-V(k,x) &\leq -\alpha_3(|x-\bar x(k)|),\label{convergent Lyapunov function 2}\\
    V(k,0)\leq c &<+\infty\label{convergent Lyapunov function 3}
  \end{align}
  hold for all $x \in \mathbb{R}^n$ and $k\in \Z$. 
  Conversely, if a smooth function $V\colon  \Z\times\mathbb{R}^n\rightarrow\mathbb{R}_{\geq 0}$, a constant $c\geq 0$,  functions $\alpha_1,\alpha_2, \in \Kinf$, and $\alpha_{3}\colon\Rnn\to\Rnn$ positive definite  are given such that for some trajectory $\bar x\colon  \Z \rightarrow \R^n$ estimates~\eqref{convergent Lyapunov function 1}--\eqref{convergent Lyapunov function 3} hold, then system~\eqref{system no input} is uniformly globally convergent and the solution $\bar x(k)$ is the unique bounded solution as in Definition~\ref{convergent dynamics def}. The same statement holds for uniform global exponential convergence if the $\mathcal{K}$ functions $\alpha_{1}$, $\alpha_{2}$, and $\alpha_{3}$ are replaced by quadratic functions.
\end{theorem}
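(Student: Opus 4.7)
The plan is to reduce both directions to a standard converse Lyapunov result for uniform global asymptotic stability of the origin of a discrete-time time-varying system, by translating coordinates with respect to the reference solution. For the forward direction, given the unique bounded trajectory $\bar x(k)$, I would introduce $z := x - \bar x(k)$ and the transformed dynamics $\tilde f(k,z) := f(k, z + \bar x(k)) - \bar x(k+1)$. Then $\tilde f(k, 0) \equiv 0$ (because $\bar x$ is a trajectory), $\tilde f$ inherits continuity in $z$ from $f$, and the estimate~\eqref{convergent system} translates directly into $|\psi(k; k_0, z_0)| \leq \beta(|z_0|, k-k_0)$ for solutions $\psi$ of $z(k+1) = \tilde f(k, z(k))$, which is exactly uniform global asymptotic stability of the origin. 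Invoking the converse Lyapunov theorem \cite[Theorem~1]{JiWa02-SCL} yields a smooth $W\colon \Z\times\R^n\rightarrow\Rnn$ and $\alpha_1,\alpha_2,\alpha_3 \in \Kinf$ satisfying $\alpha_1(|z|) \leq W(k,z) \leq \alpha_2(|z|)$ and $W(k+1,\tilde f(k,z)) - W(k,z) \leq -\alpha_3(|z|)$. Setting $V(k, x) := W(k, x - \bar x(k))$---which is smooth in $x$ since $\bar x(k)$ depends only on $k$---immediately gives~\eqref{convergent Lyapunov function 1} and~\eqref{convergent Lyapunov function 2}. Condition~\eqref{convergent Lyapunov function 3} falls out by setting $x = 0$ and using boundedness of $\bar x$: $V(k, 0) = W(k, -\bar x(k)) \leq \alpha_2(|\bar x(k)|) \leq \alpha_2\bigl(\sup_k |\bar x(k)|\bigr) =: c < \infty$.

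For the converse direction, assume the Lyapunov data and some trajectory $\bar x$ satisfying~\eqref{convergent Lyapunov function 1}--\eqref{convergent Lyapunov function 3} are given. I would first extract boundedness of $\bar x$ by evaluating the lower bound in~\eqref{convergent Lyapunov function 1} at $x = 0$ and combining with~\eqref{convergent Lyapunov function 3}: this yields $\alpha_1(|\bar x(k)|) \leq V(k, 0) \leq c$, hence $|\bar x(k)| \leq \alpha_1^{-1}(c)$ for all $k \in \Z$. Next, passing to $z$-coordinates as above turns~\eqref{convergent Lyapunov function 1}--\eqref{convergent Lyapunov function 2} into a genuine Lyapunov function for the origin of the $z$-system, and the standard discrete-time construction that extracts a $\KL$ bound from sandwich-and-decrease Lyapunov inequalities then delivers~\eqref{convergent system}. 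Finally, uniqueness of $\bar x$ as the bounded solution follows verbatim from the argument in Remark~\ref{unique of bar x}.

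The exponential version is handled by the same scheme: specializing $\alpha_1, \alpha_2, \alpha_3$ to quadratic functions allows the Jiang--Wang asymptotic converse to be replaced by its exponential counterpart \cite[Theorem~2]{JiWa02-SCL}, which supplies a smooth quadratic-like Lyapunov function for the origin of the $z$-system; the converse then produces the exponential $\KL$ bound $\kappa s \lambda^{-(k-k_0)}$ by the usual argument $V(k+1, \cdot) \leq (1 - \eps) V(k, \cdot)$ along trajectories combined with the quadratic sandwich.

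I expect the main obstacle to be essentially bookkeeping rather than conceptual: one must check that the translation $z = x - \bar x(k)$ really preserves every hypothesis required by \cite[Theorem~1]{JiWa02-SCL} and \cite[Theorem~2]{JiWa02-SCL}---continuity of $\tilde f$ in $z$, the correct class-$\KL$ (or exponential) estimate in $z$-coordinates, and smoothness of the Lyapunov function in the spatial variable after re-translation. Beyond that, no new machinery is required: the theorem is a packaging of the Jiang--Wang converse results, the standard Lyapunov-to-$\KL$ reduction, and Remark~\ref{unique of bar x}, with the additional uniform bound~\eqref{convergent Lyapunov function 3} being precisely what encodes boundedness of the reference trajectory in the Lyapunov formulation.
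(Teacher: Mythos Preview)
Your proposal is correct and follows essentially the same approach as the paper: both directions are reduced, via the translation $z = x - \bar x(k)$, to the converse Lyapunov results \cite[Theorem~1 and Theorem~2]{JiWa02-SCL} for the origin of the shifted system, with~\eqref{convergent Lyapunov function 3} encoding boundedness of $\bar x$ exactly as you describe and uniqueness handled by Remark~\ref{unique of bar x}. The paper additionally cites \cite[Lemma~2.8]{JiWa02-SCL} to secure smoothness of $W$, which you may want to mention explicitly.
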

The proof of Theorem~\ref{conv Lyapunov characterization} is contained in the appendix. 
A function $V$ satisfying~\eqref{convergent Lyapunov function 1}--\eqref{convergent Lyapunov function 3} is called a convergent dynamics Lyapunov function.

\subsection{Contraction Analysis}

The following theorem is a Lyapunov function characterization for globally contracting systems. Note that this result is a special case of the Lyapunov function characterization for uniform global asymptotic incremental stability.

\begin{theorem}\label{cont Lyapunov characterization}
  Assume the right hand side $f$ of~\eqref{system no input} is continuously differentiable in $x$.
  System~\eqref{system no input} is globally contracting if and only if there exist a smooth function $V\colon  \Z\times\mathbb{R}^n\times\mathbb{R}^n\rightarrow\mathbb{R}_{\geq 0}$, and constants $c_1,c_2, c_3 \in \R_{>0}$ such that
  \begin{align}
    c_1|x_1-x_2|^2 \leq V(k,x_1,x_2)&\leq c_2|x_1-x_2|^2,\label{contraction Lyapunov function 1}\\
    V\Big(k+1,f(k,x_1),f(k,x_2)\Big)-V(k,x_1,x_2)
    &\leq -c_3|x_1-x_2|^2
          \label{contraction Lyapunov function 2}
  \end{align}
  hold for all $x_1,x_2 \in \mathbb{R}^n$ and $k\in \Z$. 
\end{theorem}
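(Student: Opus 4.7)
The plan is to reduce Theorem~\ref{cont Lyapunov characterization} to an exponential-rate analogue of Theorem~\ref{incre Lyapunov characterization}, leveraging the fact (established in the paper as Theorem~\ref{equivalent contraction vs incremental}) that uniform global contraction is equivalent to uniform global exponential incremental stability. This equivalence converts a statement about the differential structure of $f$ into a trajectory-based convergence rate, which is what Lyapunov converse theorems are designed to handle.

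For the necessity direction, I would first invoke Theorem~\ref{equivalent contraction vs incremental} to obtain $\kappa \geq 1$ and $\lambda > 1$ such that $|\phi(k;k_0,\xi_1) - \phi(k;k_0,\xi_2)| \leq \kappa|\xi_1 - \xi_2|\lambda^{-(k-k_0)}$ for all $\xi_1,\xi_2 \in \R^n$ and all $k \geq k_0$. Next, following the construction used for Theorem~\ref{incre Lyapunov characterization}, form the augmented system on $\R^{2n}$ with state $z = [x_1^T, x_2^T]^T$ and dynamics $z(k+1) = [f(k,x_1)^T, f(k,x_2)^T]^T$. The exponential incremental bound above is precisely uniform global exponential stability of the closed (noncompact) diagonal set $\Delta = \{z \in \R^{2n}: x_1 = x_2\}$ for the augmented system. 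Now apply the exponential converse Lyapunov theorem for time-varying systems with a possibly noncompact exponentially stable set, \cite[Theorem~2]{JiWa02-SCL}, to obtain a continuous function $W(k,z)$ satisfying $c_1 |z|_\Delta^2 \leq W(k,z) \leq c_2 |z|_\Delta^2$ and $W(k+1, F(k,z)) - W(k,z) \leq -c_3 |z|_\Delta^2$, where $|z|_\Delta = \tfrac{1}{\sqrt 2}|x_1 - x_2|$. Defining $V(k,x_1,x_2) := W(k,[x_1^T,x_2^T]^T)$ and absorbing the factor into the constants $c_1,c_2,c_3$ yields exactly~\eqref{contraction Lyapunov function 1}--\eqref{contraction Lyapunov function 2}.

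For the sufficiency direction, iterating~\eqref{contraction Lyapunov function 2} along any two trajectories $\phi(\cdot;k_0,\xi_1)$ and $\phi(\cdot;k_0,\xi_2)$ gives $V(k,\phi(k;k_0,\xi_1),\phi(k;k_0,\xi_2)) \leq (1 - c_3/c_2)^{k-k_0}\, V(k_0,\xi_1,\xi_2)$, and sandwiching with~\eqref{contraction Lyapunov function 1} yields the exponential incremental bound $|\phi(k;k_0,\xi_1) - \phi(k;k_0,\xi_2)|^2 \leq (c_2/c_1)(1-c_3/c_2)^{k-k_0}|\xi_1-\xi_2|^2$; here $c_3 \leq c_2$ because $V \geq 0$ forces $1 - c_3/c_2 \geq 0$. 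Invoking Theorem~\ref{equivalent contraction vs incremental} in the reverse direction then recovers uniform global contraction of~\eqref{system no input}.

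The main obstacle is the necessity direction, specifically locating a converse Lyapunov result that gives \emph{quadratic} bounds on $V$ and is applicable to a noncompact set in discrete-time with time-varying dynamics; this is precisely what \cite[Theorem~2]{JiWa02-SCL} delivers, which is also why the theorem only asserts continuity (rather than smoothness, as in Theorems~\ref{incre Lyapunov characterization} and~\ref{conv Lyapunov characterization}) of $V$. A secondary subtle point is verifying that the exponential rate in the incremental bound genuinely translates to the matching quadratic upper bound in~\eqref{contraction Lyapunov function 1}, which is standard in the Massera-type construction used within \cite{JiWa02-SCL}.
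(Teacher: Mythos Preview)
Your proposal is correct and follows essentially the same route as the paper: reduce global contraction to uniform global exponential incremental stability via Theorem~\ref{equivalent contraction vs incremental}, lift to the augmented system on $\R^{2n}$ with the diagonal set $\Delta$, and invoke the exponential converse result \cite[Theorem~2]{JiWa02-SCL}. The only point worth flagging is the non-circularity caveat the paper makes explicit---that the proof of Theorem~\ref{equivalent contraction vs incremental} does not itself rely on Theorem~\ref{cont Lyapunov characterization}---which you should also record.
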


The proof of Theorem~\ref{cont Lyapunov characterization} is contained
in the appendix.
A function $V$ satisfying~\eqref{contraction Lyapunov function
  1}--\eqref{contraction Lyapunov function 2} is called a contraction
analysis Lyapunov function.

\section{Comparisons}\label{incre compare sec:comparison}

In this section, we compare the three previously discussed properties in pairs. While any asymptotically stable linear system satisfies all three properties, there are distinct differences when nonlinear systems are considered. We provide several examples of systems that highlight the essential differences among the three convergence notions. Several sufficient conditions are also proposed to establish mutual relationships. Finally, we provide a discrete-time Demidovi{\v{c}} condition that is sufficient for all three properties.

\subsection{Incremental Stability vs. Convergent Dynamics}
The following example, which is adapted from \cite[Example~3]{RWM13-SCL}, provides a system that is uniformly globally convergent but not uniformly globally asymptotically incrementally stable.

\begin{example}
  \label{example:1}
  Consider the system
  \begin{equation}
    \label{IncSt vs ConDy ex 1}
    x(k+1)=f(k,x(k)),\qquad x(k) \in\R^{2},
  \end{equation}
  with
  \begin{align*}
    f(k,x)\coloneqq
    \frac{1}{2} 
    \begin{bmatrix}
      \cos(|z(k)|) &-\sin(|z(k)|) \\
      \sin(|z(k)|) & \cos(|z(k)|)
    \end{bmatrix}
                     z(k)
                     +
                     \begin{bmatrix}
                       \cos(k+1) \\
                       \sin(k+1)
                     \end{bmatrix},
  \end{align*}
  where $\bar{x}(k) \coloneqq \big(\cos(k), \sin(k)\big)^{T}$ and $z(k)\coloneqq x(k)-\bar{x}(k)$. 
  Clearly, $\bar{x}(k)$ is a bounded solution of \eqref{IncSt vs ConDy
    ex 1}. Now, consider the time-varying, quadratic Lyapunov function
  $V(k,x)=\big|z(k)\big|^2$. We compute
  \begin{multline*}
    V\big(k+1,f(x)\big)-V(k,x)
    =\big|f(k,x)-\bar x(k+1)\big|^2-\big|z(k)\big|^2\\
    =\frac{1}{4}\big|z(k)\big|^2-\big|z(k)\big|^2
    =-\frac{3}{4}\big|z(k)\big|^2\leq 0.
  \end{multline*}
  Hence, appealing to Theorem~\ref{conv Lyapunov characterization}, system~\eqref{IncSt vs ConDy ex 1} is uniformly globally convergent.
  Rewriting $z(k)$ in polar coordinates $\big(r(k),\theta(k)\big)$ yields 
  \begin{align*}
    &r(k+1)=\frac{r(k)}{2}, &
    &\theta(k+1)=\theta(k)+r(k),\\
    \intertext{whose explicit solution for an initial condition $(r_0,\theta_0)\in\R\times\R$ is}
    &r(k)=\frac{r_0}{2^{k-k_{0}}}, & 
    &\theta(k)=\theta_0+ r_0\sum_{\kappa=0}^{k-k_{0}-1}\frac{1}{2^{\kappa}}.
  \end{align*}

  \begin{center}
    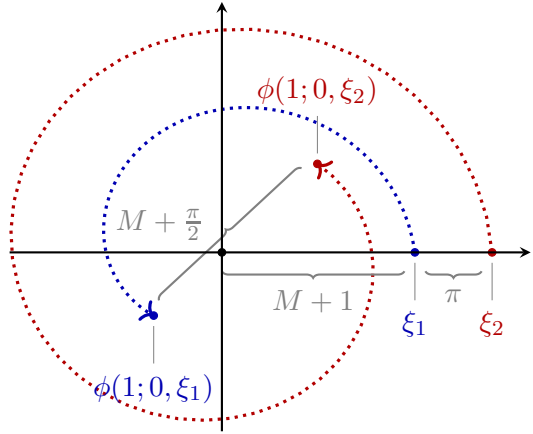
\begin{figure}
      \centering
      \begin{DIFnomarkup}
        \begin{tikzpicture}[baseline=5.5cm]
          \colorlet{primary}{blue!70!black}
          \colorlet{secondary}{red!70!black}
          \begin{axis}[
            domain=-7:7,
            y domain=-7:7,
            view={0}{90},
            xmin=-5.5,xmax=8,ymin=-5,ymax=7,
            axis lines=middle,
            axis line style={->,thick,-stealth,black},
            xtick=\empty,ytick=\empty,
            ]
            \addplot3+[secondary,domain=0:1,samples=50,samples y=0,mark=none,dotted,very thick,->]
            ( {(7-3.5*x)*cos(x*405)}, {(7-3.5*x)*sin(x*405)}, {0} ) ;
            \addplot3+[primary,domain=0:1,samples=50,samples y=0,mark=none,dotted,very thick,->]
            ( {(5-2.5*x)*cos(x*225)}, {(5-2.5*x)*sin(x*225)}, {0} ) ;
            \draw[fill] (axis cs: 0,0) circle[radius=.5mm];
            \draw[primary,fill] (axis cs: 5,0) circle[radius=.5mm] node[pin=-90:{$\xi_{1}$}] (xi1) {};
            \draw[secondary,fill] (axis cs: 7,0) circle[radius=.5mm] node[pin=-90:{$\xi_{2}$}] (xi2) {};
            \draw[primary,fill] (axis cs: -1.767,-1.767) circle[radius=.5mm] node[pin=-90:{$\phi(1;0,\xi_{1})$}] (x1) {};
            \draw[secondary,fill] (axis cs: 2.474,2.474) circle[radius=.5mm] node[pin=90:{$\phi(1;0,\xi_{2})$}] (x2) {};

            \draw [
            gray,
            thick,
            decoration={
              brace,
              raise=0.1cm
            },
            decorate
            ] (x1) -- (x2) 
            node [pos=0.5,anchor=north,yshift=0.55cm,xshift=-1cm] {$M+\frac{\pi}{2}$};

            \draw [
            gray,
            thick,
            decoration={
              brace,
              mirror,
              raise=0.25cm
            },
            decorate
            ] (axis cs:0,0) -- (xi1) 
            node [pos=0.5,anchor=north,yshift=-0.35cm] {$M+1$};

            \draw [
            gray,
            thick,
            decoration={
              brace,
              mirror,
              raise=0.25cm
            },
            decorate
            ] (xi1) -- (xi2) 
            node [pos=0.5,anchor=north,yshift=-0.35cm] {$\pi$};
            
          \end{axis}
        \end{tikzpicture}
      \end{DIFnomarkup}
      
      \caption{The two trajectories of the system defined in Example~\ref{example:1} start on the positive real half line with an
        initial separation of $\pi$
        at time $k = 0$ and the initial distances to the
        origin are $M+1$ and $M+1+\pi$. At time $k = 1$,
        the polar arguments of the trajectories are shifted by $180\degree$ so that the separation distance
        is $M+\frac{\pi}{2}$.
        \label{incre counterexample 1}
      }
    \end{figure}
  \end{center}

  At time $k_{0}=0$ consider two initial conditions in Cartesian
  coordinates $\xi_{1}=(M+1,0)$ and $\xi_{2}=(M+\pi+1,0)$ for some
  $M \in \R_{>0}$ as shown in Figure~\ref{incre counterexample 1}. The
  polar coordinates of $z(0)$ corresponding to $\xi_1$ and $\xi_2$ are
  $(r_1^{0},\theta_1^{0})=(M,0)$ and
  $(r_2^{0},\theta_2^{0})=(M+\pi,0)$. The initial separation is
  $|\xi_{1}-\xi_{2}|=\pi$. At the time instance $k=1$, the polar
  coordinates of $z(1)$ are $\big(r_1(1),\theta_1(1)\big)=\big(\frac{M}{2},M\big)$ and
  $(r_2(1),\theta_2(1))=\big(\frac{M+\pi}{2},M+\pi\big)$, hence, the
  angle difference is $\pi$. Consequently, the distance between two
  states is
  $|\phi(1;0,\xi_{1})-\phi(1;0,\xi_{2})|=r_1(1)+r_2(1)=\frac{M}{2}+\frac{M+\pi}{2}=M+\frac{\pi}{2}$.
  Since $M\in \R_{>0}$ is arbitrary, for any function $\beta\in \KL$,
  we can always choose a sufficiently large $M$ such
  that~\eqref{incremental stable} is violated. Thus,
  system~\eqref{IncSt vs ConDy ex 1} is not incrementally
  stable.$\halmoswhite$
\end{example}

The above example exploits the fact that global asymptotic convergence of any two trajectories to each other (incremental stability) is a stronger property than global asymptotic convergence of all trajectories to a single trajectory (convergent dynamics). This is a direct consequence of the triangle inequality.

The following theorem provides a connection from convergent dynamics to incremental stability by restricting the state space of system~\eqref{system no input} to be a compact and positively invariant set.

\begin{theorem}\label{convergent to incremental}
  Suppose system~\eqref{system no input} is uniformly convergent on a compact and positively invariant set $\X \subset\R^n$.  Further assume that the right hand side $f$ of system~\eqref{system no input} is locally Lipschitz continuous in $x$ on $\X$. Then, system~\eqref{system no input} is uniformly asymptotically incrementally stable on $\X$. Moreover, if the attraction rate in the uniform convergence property is exponential, then the system is uniformly exponentially incrementally stable.
\end{theorem}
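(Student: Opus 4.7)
My plan is to recast the conclusion as uniform global asymptotic stability of the diagonal $\Delta=\{(x,x):x\in\X\}$ under the augmented dynamics on $\X\times\X$, and to verify it by establishing uniform stability and uniform global attractivity of $\Delta$ separately. Since $\X$ is compact, its diameter $D:=\sup_{x,y\in\X}|x-y|$ is finite, and the local Lipschitz hypothesis on $f$ in $x$ on $\X$ (taken uniform in $k$) yields a constant $L\ge 0$ with $|f(k,x)-f(k,y)|\le L|x-y|$ for all $x,y\in\X$, $k\in\Z$. Two complementary bounds on the increment then follow at once: iterating the one-step Lipschitz inequality gives
\begin{equation*}
  |\phi(k;k_0,\xi_1)-\phi(k;k_0,\xi_2)|\le L^{k-k_0}|\xi_1-\xi_2|,
\end{equation*}
while the triangle inequality through $\bar x(k)$, together with \eqref{convergent system} and $|\xi_i-\bar x(k_0)|\le D$, gives
\begin{equation*}
  |\phi(k;k_0,\xi_1)-\phi(k;k_0,\xi_2)|\le 2\beta(D,k-k_0).
\end{equation*}

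For uniform stability of $\Delta$ I would employ a \emph{switch-at-time-$T$} argument: given $\eps>0$, pick $T(\eps)\in\Z_{\ge 0}$ with $2\beta(D,T(\eps))\le\eps$ (possible since $\beta(D,\cdot)\in\L$) and set $\delta(\eps):=\eps L^{-T(\eps)}$. For $|\xi_1-\xi_2|\le\delta(\eps)$, the Lipschitz bound controls the increment by $\eps$ on $[k_0,k_0+T(\eps)]$, while the convergence bound takes over for $k-k_0>T(\eps)$. Uniform global attractivity of $\Delta$ is immediate from the convergence bound, since $\beta(D,\cdot)\in\L$ is independent of the $\xi_i\in\X$. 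Combining the two properties via the standard discrete-time converse Lyapunov machinery for closed invariant sets (cf.\ \cite[Theorem~1 and Lemma~2.8]{JiWa02-SCL} applied to the augmented system) then produces $\hat\beta\in\KL$ verifying \eqref{incremental stable}.

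For the exponential refinement, specialising $\beta(s,\tau)=\kappa s\lambda^{-\tau}$ turns the above choice into $T(\eps)=\lceil\log(2\kappa D/\eps)/\log\lambda\rceil$, and one extracts the exponential $\KL$ estimate either by direct interpolation between the two displayed bounds or, more cleanly, by using the quadratic convergent-dynamics Lyapunov function from Theorem~\ref{conv Lyapunov characterization} to build a quadratic incremental Lyapunov function and then invoking Theorem~\ref{incre Lyapunov characterization} in its quadratic form. The main obstacle lies exactly in this exponential step: the Lipschitz bound grows exponentially in $k-k_0$ while the convergence bound decays, and reconciling the two so as to preserve \emph{both} a linear dependence on $|\xi_1-\xi_2|$ \emph{and} an exponential decay rate in $k-k_0$ is the delicate part, relying essentially on the compactness of $\X$ and the uniformity in $k$ of the Lipschitz constant $L$.
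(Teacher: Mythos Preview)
Your approach is essentially the paper's: both hinge on the very same pair of bounds---the iterated Lipschitz estimate $|\phi(k;k_0,\xi_1)-\phi(k;k_0,\xi_2)|\le L^{k-k_0}|\xi_1-\xi_2|$ and the triangle-inequality estimate $|\phi(k;k_0,\xi_1)-\phi(k;k_0,\xi_2)|\le 2\beta(d_\X,k-k_0)$ through $\bar x$. The only difference is in how the two are merged into a single $\KL$ bound. The paper does it in one line: it simply writes
\[
|\phi(k;k_0,\xi_1)-\phi(k;k_0,\xi_2)|\le \min\bigl\{L^{k-k_0}|\xi_1-\xi_2|,\,2\beta(d_\X,k-k_0)\bigr\}
\]
and asserts that a majorising $\hat\beta\in\KL$ can be extracted from this minimum (a standard construction). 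Your route---verify uniform stability of $\Delta$ by a switch-at-time-$T$ argument, note uniform attractivity from the second bound, then pass through the converse machinery of \cite{JiWa02-SCL}---reaches the same destination but is more circuitous. In particular, invoking the \emph{converse} Lyapunov theorem is overkill: what you actually need is the elementary implication ``uniform stability $+$ uniform attractivity $\Rightarrow$ $\KL$ estimate'', which is a direct argument, not a converse one. The paper's ``take the min'' shortcut is cleaner and avoids the detour through the augmented system altogether at this stage.

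On the exponential refinement: the paper's written proof in fact stops at the asymptotic conclusion and does not spell out the exponential case either, so you are not behind the paper here. That said, your two suggested routes are both incomplete as stated. The ``direct interpolation'' idea runs into the difficulty you yourself flag: the raw minimum $\min\{L^{\tau}s,\,2\kappa D\lambda^{-\tau}\}$ is \emph{not} dominated by any $Cs\mu^{-\tau}$ with $\mu>1$ when $L\ge 1$ (test it along $s=2\kappa D(L\lambda)^{-\tau}$), so something beyond the min bound is needed. And the Lyapunov route is only a gesture: it is not clear how to manufacture a \emph{quadratic incremental} Lyapunov function $W(k,x_1,x_2)$ with bounds in $|x_1-x_2|^2$ out of the convergent-dynamics function $V(k,x)$, whose bounds are in $|x-\bar x(k)|^2$. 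If you want to close this gap, you will need an argument that genuinely exploits the linear-in-$|\xi-\bar x(k_0)|$ structure of the exponential convergence estimate, not merely the uniform bound $2\kappa D\lambda^{-\tau}$.
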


The proof of Theorem~\ref{convergent to incremental} is contained in
the appendix.  Conversely, incremental stability does not imply
convergent dynamics as shown by the following example adapted from
\cite[Example~4]{RWM13-SCL}.
\begin{example}
  \label{example:2}
  The system
  \begin{align}
    x(k+1)=-\frac{k}{2}-1+\frac{x(k)}{2}, ~~\text{with }  x(k_0)=\xi\in \mathbb{R} \label{ConDy Incre example 2}
  \end{align}
  has the explicit solution
  \begin{align}
    \phi(k;k_0,\xi)= \frac{k_0}{2^{k-k_0}}-k+\frac{\xi}{2^{k-k_0}},\quad\forall k\geq k_0.\label{ConDy Incre example 2 solution}
  \end{align}
  For any two initial conditions $\xi_1,\xi_2\in\R$,
  \begin{align*}
    &|\phi(k;k_0,\xi_1)-\phi(k;k_0,\xi_2)|
      = \frac{|\xi_1-\xi_2|}{2^{k-k_0}}=\beta(|\xi_1-\xi_2|,k-k_0),
  \end{align*}
  where $\beta \in \mathcal{K}\mathcal{L}$ is defined by $\beta(s,r)=\frac{s}{2^r}$ for all $s,r\in\mathbb{R}_{\geq 0}$. Hence, system~\eqref{ConDy Incre example 2} is uniformly globally asymptotically incrementally stable.
  However, for the specific initial condition $(\xi,k_0)=(0,0)$, it is straightforward that the solution~\eqref{ConDy Incre example 2 solution} from this initial condition is unbounded. Therefore, system~\eqref{ConDy Incre example 2} is not uniformly globally convergent.$\halmoswhite$
\end{example}

The above example exploits the fact that convergent dynamics requires the existence of at least one bounded solution. Whereas solutions of an incrementally stable system can be unbounded.

The following theorem provides a connection from incremental stability to convergent dynamics with an assumption that the state space of system~\eqref{system no input} is compact and positively invariant.

\begin{theorem}\label{Theorem Yakubovich lemma application}
  Suppose system~\eqref{system no input} is uniformly globally asymptotically incrementally stable and there exists a compact set $\X\subset \R^n$ that is positively invariant under~\eqref{system no input}. Then system~\eqref{system no input} is uniformly globally convergent. Moreover, if the incremental attraction rate is exponential, then it is also exponential for the convergence property.
\end{theorem}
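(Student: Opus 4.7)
The plan is to verify the two requirements of Definition~\ref{convergent dynamics def} directly: (i) produce a bilateral bounded solution $\bar{x}\colon \Z\to\R^n$, and (ii) use the incremental $\KL$-estimate to deduce the $\KL$-attraction to $\bar{x}$, together with its uniqueness.

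First I would construct $\bar{x}$ by a discrete-time Yakubovich-style diagonal argument. Fix any $\xi_0\in\X$ and, for each $n\in\Z_{\geq 0}$, set $\phi_n(k)\coloneqq \phi(k;-n,\xi_0)$, which is defined for $k\geq -n$ and, by positive invariance of $\X$, satisfies $\phi_n(k)\in\X$ for every such $k$. For each fixed $k\in\Z$ and $n$ large enough that $-n\leq k$, the values $\phi_n(k)$ lie in the compact set $\X$. A standard Cantor diagonal extraction then yields a subsequence $\{n_j\}$ such that $\phi_{n_j}(k)$ converges in $\X$ for every $k\in\Z$; denote the limit by $\bar{x}(k)$. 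Continuity of $f$ in $x$, applied to the recursion $\phi_{n_j}(k+1)=f(k,\phi_{n_j}(k))$, shows that $\bar{x}(k+1)=f(k,\bar{x}(k))$ for all $k\in\Z$, so $\bar{x}$ is a bilateral solution of~\eqref{system no input} taking values in the compact set $\X$; in particular it is bounded for all $k\in\Z$.

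Next, given any $\xi\in\R^n$, $k_0\in\Z$ and $k\geq k_0$, the incremental stability estimate~\eqref{incremental stable} applied to the two initial conditions $\xi$ and $\bar{x}(k_0)$ gives
\begin{equation*}
  |\phi(k;k_0,\xi)-\bar{x}(k)|
  =|\phi(k;k_0,\xi)-\phi(k;k_0,\bar{x}(k_0))|
  \leq \beta(|\xi-\bar{x}(k_0)|,k-k_0),
\end{equation*}
which is exactly~\eqref{convergent system}. Uniqueness of $\bar{x}$ among bilateral bounded solutions follows along the lines of Remark~\ref{unique of bar x}: any other bounded bilateral solution $\tilde{x}$ satisfies $\sup_{k_0\in\Z}|\bar{x}(k_0)-\tilde{x}(k_0)|\eqqcolon B<\infty$ by boundedness, and applying the incremental estimate at time $k$ from initial time $k_0$ yields $|\bar{x}(k)-\tilde{x}(k)|\leq \beta(B,k-k_0)\to 0$ as $k_0\to -\infty$, so $\tilde{x}\equiv\bar{x}$. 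Thus both conditions of Definition~\ref{convergent dynamics def} hold and the system is uniformly globally convergent. For the exponential claim, the same argument goes through verbatim with $\beta(s,k-k_0)=\kappa s\lambda^{-(k-k_0)}$, since the estimate above inherits this form.

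The only real technicality is the construction of the bilateral solution $\bar{x}$: one has to be a little careful about the diagonal extraction over all of $\Z$ (not just $\Z_{\geq 0}$), and about ensuring that passing to the limit in the recursion is legitimate, which is where compactness of $\X$ (to provide convergent subsequences at every $k$) and continuity of $f$ (assumed throughout) are used crucially. Everything else is a direct application of~\eqref{incremental stable} and the triangle/limit argument for uniqueness.
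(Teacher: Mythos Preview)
Your argument is correct and is in fact more direct than the paper's. The paper proceeds via the Lyapunov characterizations: it invokes Theorem~\ref{incre Lyapunov characterization} to obtain an incremental Lyapunov function $V$, then cites a discrete-time Yakubovich Lemma (\cite[Lemma~2]{PaWo08_ACC}) as a black box for the existence of a bilateral solution $\bar x$ in $\X$, and finally checks that $W(k,x)\coloneqq V(k,x,\bar x(k))$ satisfies \eqref{convergent Lyapunov function 1}--\eqref{convergent Lyapunov function 3}, concluding via Theorem~\ref{conv Lyapunov characterization}. You instead (a) construct $\bar x$ explicitly by a Cantor diagonal extraction from the trajectories $\phi(\cdot;-n,\xi_0)$, using only compactness of $\X$ and continuity of $f(k,\cdot)$, and (b) bypass the Lyapunov machinery entirely by specializing the incremental estimate~\eqref{incremental stable} to the pair $(\xi,\bar x(k_0))$. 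This is more elementary and self-contained, and it makes transparent that the convergent-dynamics $\KL$-function is the \emph{same} $\beta$ as the incremental one (whence the exponential claim is immediate). The paper's route, on the other hand, illustrates how the two Lyapunov characterizations are linked through the substitution $x_2=\bar x(k)$, which is the conceptual point emphasized in Section~\ref{incre compare sec: Lyapunov}.
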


The proof of Theorem~\ref{Theorem Yakubovich lemma application} is contained in the appendix.

The following theorem provides a sufficient condition for system~\eqref{system no input} to satisfy all three convergence properties.  It is the discrete-time analogue of the Demidovi{\v{c}} result \cite[Theorem~1]{PPWH04_SCL}.

\begin{theorem}[Discrete-time Demidovi{\v{c}} condition]
  \label{Demidovich condition theorem}
  Assume the right hand side $f$ of system~\eqref{system no input} is continuously differentiable in $x$. Suppose there exists a positive definite matrix $P$ such that the matrix
  \begin{align}
    J(k,x)\coloneqq \frac{\partial f}{\partial x}(k,x)^T P \frac{\partial f}{\partial x}(k,x)-\rho P \label{Demidovich condition}
  \end{align}
  is negative semidefinite uniformly in $(k,x)\in\Z\times\R^n$ for some $\rho \in (0,1)$. Then system~\eqref{system no input} is uniformly globally exponentially incrementally stable and globally contracting. Furthermore, if there exists $c\geq0$ such that
  \begin{align}
    \sup_{k\in\Z}|f(k,0)|=c<\infty, \label{Demidovich bounded condition}
  \end{align}
  then system~\eqref{system no input} is uniformly globally exponentially convergent.
\end{theorem}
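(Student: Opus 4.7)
The plan is to use the single quadratic candidate $V(k,x_1,x_2) = (x_1-x_2)^T P (x_1-x_2)$ to simultaneously establish both uniform global exponential incremental stability and global contraction directly from the Demidovi{\v{c}} inequality \eqref{Demidovich condition}, and then, under the additional bound \eqref{Demidovich bounded condition}, to exhibit a compact positively invariant set so that Theorem~\ref{Theorem Yakubovich lemma application} delivers uniform global exponential convergence.

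First I would promote the pointwise matrix inequality $J(k,x)\preceq 0$ to the chord-wise $P$-norm Lipschitz estimate
\[
\|f(k,x_1)-f(k,x_2)\|_P \leq \sqrt{\rho}\,\|x_1-x_2\|_P,
\]
valid for all $k\in\Z$ and $x_1,x_2\in\R^n$. Writing $w = x_1-x_2$, the fundamental theorem of calculus along the segment from $x_2$ to $x_1$ gives $f(k,x_1)-f(k,x_2) = \int_0^1 \frac{\partial f}{\partial x}(k,x_2+sw)\,w\, ds$. The integral triangle inequality combined with the pointwise bound $\|\frac{\partial f}{\partial x}(k,y)w\|_P^2 = w^T J(k,y) w + \rho\,\|w\|_P^2 \leq \rho\,\|w\|_P^2$ yields the displayed inequality. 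Squaring then produces $V(k+1,f(k,x_1),f(k,x_2)) - V(k,x_1,x_2) \leq -(1-\rho)\lambda_{\min}(P)\,|x_1-x_2|^2$, which is a contraction analysis Lyapunov function in the sense of Theorem~\ref{cont Lyapunov characterization}; iterating the stronger relation $V(k+1,\cdot,\cdot) \leq \rho V(k,\cdot,\cdot)$ along trajectories and using $\lambda_{\min}(P)|\cdot|^2 \leq V \leq \lambda_{\max}(P)|\cdot|^2$ delivers \eqref{exponential incremental stability} with $\kappa = \sqrt{\lambda_{\max}(P)/\lambda_{\min}(P)}$ and $\lambda = 1/\sqrt{\rho} > 1$. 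For the contracting part, I would exhibit the constant metric $\Theta(k,x)\equiv P^{1/2}$: the bounds \eqref{bounds on Theta} hold with the eigenvalue bounds of $P$, and $F(k,x) = P^{1/2}\frac{\partial f}{\partial x}(k,x)P^{-1/2}$ satisfies $F^T F \preceq \rho I$ by conjugating \eqref{Demidovich condition} by $P^{-1/2}$, so \eqref{contraction condition} holds with $\mu = 1-\rho$.

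For the convergence part, I would specialize the chord-wise bound to $x_2 = 0$, so that \eqref{Demidovich bounded condition} gives $\|f(k,x)\|_P \leq \sqrt{\rho}\|x\|_P + \sqrt{\lambda_{\max}(P)}\,c$. Choosing any $R \geq \sqrt{\lambda_{\max}(P)}\,c/(1-\sqrt{\rho})$ makes $\X = \{x\in\R^n : \|x\|_P \leq R\}$ compact and positively invariant under \eqref{system no input}. Combining the uniform global exponential incremental stability already established with the existence of this compact positively invariant set, Theorem~\ref{Theorem Yakubovich lemma application} in its exponential form yields uniform global exponential convergence.

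The main obstacle is the mean value step that turns the pointwise Demidovi{\v{c}} condition into a genuine chord-wise Lipschitz estimate: one must justify passing the $P$-norm under the integral and then invoking the pointwise matrix inequality on every integrand along the segment. Everything afterwards is routine manipulation of the quadratic Lyapunov function together with one invocation of the already-established Theorem~\ref{Theorem Yakubovich lemma application}.
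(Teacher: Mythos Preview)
Your proposal is correct and arrives at the same quadratic Lyapunov function $V(k,x_1,x_2)=(x_1-x_2)^TP(x_1-x_2)$ as the paper, but the route differs in two places worth noting. First, to pass from the pointwise Demidovi{\v{c}} inequality to the chord-wise estimate, the paper applies the scalar mean value theorem to the auxiliary function $\Phi(s)=(f(k,\xi_1)-f(k,\xi_2))^TP\,f(k,s\xi_1+(1-s)\xi_2)$ and extracts a single intermediate point $\check\xi$, whereas you use the integral form of the fundamental theorem of calculus and the triangle inequality in the $P$-norm; your version is arguably cleaner and sidesteps the delicate step of identifying $f(k,\xi_1)-f(k,\xi_2)$ with a single Jacobian-times-increment. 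Second, for the convergence claim under~\eqref{Demidovich bounded condition} the paper simply invokes \cite[Theorem~1]{PaWo12_TAC}, while you instead exhibit the compact positively invariant $P$-ball $\{\|x\|_P\leq R\}$ and appeal to Theorem~\ref{Theorem Yakubovich lemma application} in its exponential form. Your route is more self-contained within the paper's own framework; the paper's route is shorter but relies on an external reference. For the contraction statement the two also differ slightly: the paper checks the Lyapunov conditions of Theorem~\ref{cont Lyapunov characterization}, while you verify the definition directly with the constant metric $\Theta=P^{1/2}$, which is equally valid and perhaps more transparent.
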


The proof of Theorem~\ref{Demidovich condition theorem} is contained in the appendix.

\subsection{Contraction Analysis vs. Incremental Stability}

Assuming the right hand side of system~\eqref{system no input} is continuously differentiable, we demonstrate that contraction analysis is a strictly stronger property than asymptotic incremental stability. In fact, contraction analysis is equivalent to exponential incremental stability.
\begin{theorem}\label{equivalent contraction vs incremental}
  Suppose the right hand side of system~\eqref{system no input} is continuously differentiable in $x$ on $\R^n$. System~\eqref{system no input} is uniformly globally exponentially incrementally stable if and only if it is uniformly globally contracting.
\end{theorem}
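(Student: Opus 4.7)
The plan is to prove the equivalence by linking the contraction metric $\Theta$ to the variational (displacement) dynamics~\eqref{displacement dynamics} and, via path integration, to the full nonlinear incremental distance.

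For the forward direction (contracting $\Rightarrow$ exponentially incrementally stable), I would fix $\xi_1,\xi_2\in\R^n$ and join them by the straight-line path $\gamma(s):=\xi_1+s(\xi_2-\xi_1)$, $s\in[0,1]$. The evolved family $\psi(s,k):=\phi(k;k_0,\gamma(s))$ is continuously differentiable in $s$ because $f$ is $C^1$, and $w(s,k):=\partial_s \psi(s,k)$ satisfies the displacement dynamics~\eqref{displacement dynamics} along $\psi(s,\cdot)$. Setting $z(s,k):=\Theta(k,\psi(s,k))\,w(s,k)$, the recursion $z(s,k+1)=F(k,\psi(s,k))\,z(s,k)$ combined with~\eqref{contraction condition} yields $|z(s,k+1)|^2\leq(1-\mu)\,|z(s,k)|^2$. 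Iterating from $k_0$ to $k$ and using~\eqref{bounds on Theta} at both ends to convert between $|w|$ and $|z|$ gives $|w(s,k)|\leq\sqrt{\rho/\eta}\,(1-\mu)^{(k-k_0)/2}\,|\xi_2-\xi_1|$. Integrating over $s$ via $\phi(k;k_0,\xi_2)-\phi(k;k_0,\xi_1)=\int_0^1 w(s,k)\,ds$ delivers~\eqref{exponential incremental stability} with $\kappa=\sqrt{\rho/\eta}\geq 1$ and $\lambda=(1-\mu)^{-1/2}>1$ (the degenerate case $\mu=1$ forces $\partial f/\partial x\equiv 0$ and is trivial).

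For the converse, assume~\eqref{exponential incremental stability} with constants $\kappa\geq 1$, $\lambda>1$. I first extract a pointwise operator-norm bound on the state-transition matrix $\Phi(k,k_0,\xi):=\tfrac{\partial\phi}{\partial\xi}(k;k_0,\xi)$, which exists and is continuous because $f$ is $C^1$. For any unit vector $v$, the expansion $\phi(k;k_0,\xi+\varepsilon v)-\phi(k;k_0,\xi)=\varepsilon\,\Phi(k,k_0,\xi)\,v+o(\varepsilon)$ combined with~\eqref{exponential incremental stability} and the limit $\varepsilon\to 0$ gives $\|\Phi(k,k_0,\xi)\|\leq\kappa\lambda^{-(k-k_0)}$. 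I then define
\[
M(k,\xi):=\sum_{j=k}^{\infty}\Phi(j,k,\xi)^T\,\Phi(j,k,\xi),
\]
which converges geometrically and satisfies $I\preceq M(k,\xi)\preceq c\,I$ with $c:=\kappa^2\lambda^2/(\lambda^2-1)$. The cocycle identity $\Phi(j,k,\xi)=\Phi(j,k+1,f(k,\xi))\,\tfrac{\partial f}{\partial x}(k,\xi)$, a consequence of uniqueness of solutions and the chain rule, reshuffles the tail of the sum to yield the discrete recurrence $\tfrac{\partial f}{\partial x}(k,\xi)^T\,M(k+1,f(k,\xi))\,\tfrac{\partial f}{\partial x}(k,\xi)=M(k,\xi)-I$. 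Choosing $\Theta(k,\xi):=M(k,\xi)^{1/2}$ (symmetric positive definite square root) makes~\eqref{bounds on Theta} hold with $\eta=1$ and $\rho=c$, and a direct computation gives $F(k,\xi)^T F(k,\xi)=I-M(k,\xi)^{-1}\preceq(1-1/c)\,I$, so~\eqref{contraction condition} holds with $\mu:=1/c>0$.

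The main obstacle is the backward direction: transferring the nonlinear incremental estimate to a uniform operator-norm bound on $\Phi$, and verifying that the resulting geometric series is well defined and cooperates cleanly with the cocycle identity to produce the crucial difference equation for $M$. The forward direction, by contrast, is essentially the chain rule along a parametrized family of solutions combined with the discrete contraction recursion; no regularity of $\Theta$ beyond continuity is needed, and continuity of $M^{1/2}$ follows automatically from continuous dependence of $\Phi$ on $\xi$.
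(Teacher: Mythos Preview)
Your proposal is correct and follows essentially the same route as the paper: the implication contracting $\Rightarrow$ exponentially incrementally stable is obtained by propagating the displacement $w=\partial_s\psi$ along a straight-line family of initial conditions and integrating (the paper phrases this as decay of the $Q$-weighted arc length with $Q=\Theta^T\Theta$, which is the same computation), and the converse constructs the metric as $M(k,\xi)=\sum_{j\ge k}\Phi(j,k,\xi)^T\Phi(j,k,\xi)$ exactly as the paper does (the paper defers the recurrence to \cite[Theorem~23.3]{Rugh95} and takes a Cholesky factor, you write out the cocycle identity and take the symmetric square root---either works).

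One point to watch: the paper's definition~\eqref{define Jacobian matrix} evaluates $\Theta$ at $(k+1,x)$, not $(k+1,f(k,x))$, whereas your recursion $z(s,k+1)=F(k,\psi(s,k))z(s,k)$ and your identity $\tfrac{\partial f}{\partial x}^{T}M(k+1,f(k,\xi))\tfrac{\partial f}{\partial x}=M(k,\xi)-I$ naturally place the next-step metric at $f(k,\xi)$. The paper's own proof has the same tension (compare \eqref{Conveser Linear Lyapunov func 3} with the arc-length step that uses $Q(k+1,\gamma_{k+1}(s))$), so this is a notational issue in the definition rather than a flaw in your argument; just make the evaluation point consistent when you write out the details.
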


The proof of Theorem~\ref{equivalent contraction vs incremental} is contained in the appendix.

\subsection{Convergent Dynamics vs. Contraction Analysis}

System~\eqref{IncSt vs ConDy ex 1} in Example~\ref{example:1} is globally convergent, however, it is not globally contracting because it is not differentiable at $x=0$. Hence, a uniformly globally convergent system is not necessarily globally contracting.

However, even in the case where the system dynamics are continuously differentiable, global convergent dynamics is still different from contraction analysis. This is due to the fact that contraction analysis requires an exponential convergence rate whereas convergent dynamics only requires asymptotic convergence. The following example, adapted from \cite[Example~1]{TKD15_Aut}, provides a (time-invariant) system that is asymptotically convergent but not exponentially convergent.

\begin{example}
  \label{example:3}
  The system
  \begin{align}
    \begin{aligned}
      x(k+1)=f(x(k)) \coloneqq \frac{x(k)}{\sqrt{x(k)^2+1}},
    \end{aligned}
    \label{conv eq example 3}
  \end{align}
  with $x(k_0)=\xi\in\mathbb{R}$
  has the solution
  \begin{align}
    \phi(k;k_{0},\xi) = \frac{\xi}{\sqrt{(k-k_0)\xi^2+1}},\quad \forall k \in\Z, k\geq k_0 . \label{conv eq example 3 solution}
  \end{align}
  The zero solution $\bar x(k)=0$ is (uniformly) globally asymptotically stable. Indeed, for the Lyapunov function $ V(x)=x^2$, we have, for all $x \neq 0$,
  \begin{align}
    V(f(x)) -V(x) =-\frac{x^4}{x^2+1}< 0. \label{conv example 1:1}
  \end{align}
  Hence, system~\eqref{conv eq example 3} is (uniformly) globally convergent. By inspection of~\eqref{conv eq example 3 solution}, we see that the convergence to the zero solution is not exponential, hence, by Theorem~\ref{equivalent contraction vs incremental}, system~\eqref{conv eq example 3} is not globally contracting.$\halmoswhite$
\end{example}

\begin{example} 
  Returning to Example~\ref{example:2}, system~\eqref{ConDy Incre example 2} is not convergent since the solution passing through $(k_0,\xi)=(0,0)$ is unbounded. However, system~\eqref{ConDy Incre example 2} is globally contracting. Indeed, take $\Theta (k,x)=1$ for all $k\in\Z$ and $x\in\R$, $\mu=\frac{3}{4}$, and note that $\frac{\partial f}{\partial x}(k,x)=\frac{1}{2}$ for all $k\in \Z$ and $x\in\mathbb{R}^n$. With $F(k,x)$ given by~\eqref{define Jacobian matrix} we see that 
  \begin{align*}
    &F(k,x)^TF(k,x)-1 = -\mu.
  \end{align*}
  Hence, system~\eqref{ConDy Incre example 2} is globally contracting.$\halmoswhite$
\end{example}

Using the results of Theorem~\ref{Theorem Yakubovich lemma application} and Theorem~\ref{equivalent contraction vs incremental}, we provide sufficient conditions under which global contraction implies convergent dynamics.

\begin{theorem}\label{Convergence vs contraction theorem}
  Suppose the mapping $f$ of system~\eqref{system no input} is continuously differentiable in $x$ on a compact and positively invariant set $\X\subset\R^n$. If system~\eqref{system no input} is uniformly contracting in $\X$, then system~\eqref{system no input} is uniformly exponentially convergent in $\X$.  
\end{theorem}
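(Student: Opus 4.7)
The plan is to chain together the equivalence of Theorem~\ref{equivalent contraction vs incremental} with the incremental-stability-implies-convergence result of Theorem~\ref{Theorem Yakubovich lemma application}. First I would upgrade uniform contraction on $\X$ to uniform exponential incremental stability on $\X$ (a local analogue of Theorem~\ref{equivalent contraction vs incremental}), and then invoke Theorem~\ref{Theorem Yakubovich lemma application} applied to the compact positively invariant set $\X$ to conclude uniform exponential convergence.

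For the first step, since $\X$ is positively invariant, any trajectory $x(\cdot)$ of~\eqref{original dynamics} starting in $\X$ stays in $\X$, so the bounds~\eqref{bounds on Theta}--\eqref{contraction condition} hold along $x(\cdot)$. Setting $y_\delta(k) \coloneqq \Theta(k,x(k)) x_\delta(k)$ in the variational system~\eqref{eq:tot} and using~\eqref{define Jacobian matrix} gives $y_\delta(k+1) = F(k,x(k))\,y_\delta(k)$, so that~\eqref{contraction condition} yields $|y_\delta(k)|^2 \leq (1-\mu)^{k-k_0}|y_\delta(k_0)|^2$. Combined with~\eqref{bounds on Theta}, this shows that infinitesimal displacements along any trajectory in $\X$ contract at the uniform geometric rate $\sqrt{1-\mu}$ with multiplicative constant $\sqrt{\rho/\eta}$, independent of $k_0$.

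To lift this infinitesimal estimate to a finite bound on $|\phi(k;k_0,\xi_1) - \phi(k;k_0,\xi_2)|$ for $\xi_1,\xi_2\in\X$, I would connect $\xi_1,\xi_2$ by a smooth path $\gamma\colon[0,1]\to\X$, consider the one-parameter family of solutions $s\mapsto \phi(k;k_0,\gamma(s))$, differentiate in $s$ to obtain a displacement along each trajectory in the family, apply the infinitesimal bound pointwise in $s$, and integrate in $s$. Compactness of $\X$ makes the relevant path lengths uniformly bounded, and the resulting estimate is precisely uniform exponential incremental stability on $\X$. With this in hand, Theorem~\ref{Theorem Yakubovich lemma application} applies: its proof uses only incremental stability between solutions that lie in the compact positively invariant set $\X$, so the local version is immediate, and it produces a bounded reference trajectory $\bar x$ in $\X$ together with the desired uniform exponential convergence on $\X$.

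The main obstacle is the path-integration step: when $\X$ is not convex, the straight-line segment between two points of $\X$ need not remain in $\X$, and in general a path inside $\X$ joining two prescribed endpoints may not exist. This can be circumvented either by assuming path-connectedness of $\X$ and using compactness to bound the length of connecting paths uniformly, or by extending $f$ and $\Theta$ smoothly to a neighborhood of $\X$ so that~\eqref{bounds on Theta}--\eqref{contraction condition} still hold (by continuity) on that neighborhood, after which straight-line paths in $\R^n$ may be used. Either workaround is routine given the compactness and differentiable structure of $\X$, and neither affects the constants in the final exponential convergence estimate.
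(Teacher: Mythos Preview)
Your proposal is correct and follows exactly the paper's approach: the paper's proof is a two-line appeal to Theorem~\ref{equivalent contraction vs incremental} (contraction $\Leftrightarrow$ exponential incremental stability) followed by Theorem~\ref{Theorem Yakubovich lemma application} (incremental stability plus a compact positively invariant set $\Rightarrow$ convergence, with exponential rate preserved). You are in fact more careful than the paper in spelling out the on-$\X$ version of the contraction-to-incremental step and in flagging the convexity obstacle for the path-integration argument; the paper's proof does not raise or resolve that issue.
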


The proof of Theorem~\ref{Convergence vs contraction theorem} is
contained in the appendix.

\section{Conclusions}\label{incre compare sec:Conc}

This paper contributes discrete-time, time-varying, smooth Lyapunov function characterizations for incremental stability, convergent dynamics, and contraction analysis. The paper also contributes examples of systems that highlight the essential differences as well as similarities among the three considered notions of stability. Moreover, with appropriate assumptions, we present several conditions that provide connections between each of the three considered convergence properties. Overall, assuming the right hand side of the considered system is continuously differentiable, the relationships between the three convergence properties can be summarized as in Figure~\ref{Convergence commutative diagram}.

\begin{figure}
  \centering
  \newcommand{\myfontsize}[1]{\mbox{#1}}
  \colorlet{falsecolor}{red!70!black}
  \colorlet{truecolor}{green!70!black}
  \colorlet{compactcolor}{blue!70!black}
  \begin{DIFnomarkup}
    \begin{tikzcd}[ row sep=2cm, column sep=1.5cm, 
      arrow style=tikz,
      arrows={
        /tikz/line width=.65pt,
        Rightarrow,
        double,
        >={.To[scale=0.7]},
        shorten <=4pt,
        shorten >=4pt,
      }
      ]
      \text{IS}
      \arrow[rr, compactcolor, dashed, <->]
      \arrow[dd,"{\myfontsize{not}}" description, ->, bend right,falsecolor]
      && \text{CD}
      \arrow[dd,"{\myfontsize{not}}" description, ->, bend left,
      falsecolor]
      \\ &
      \text{ECD}
      \arrow[ul, ->, bend right=10,compactcolor, dashed]
      \arrow[ul,"{\myfontsize{not}}" description, <-, bend left=10,falsecolor]
      \arrow[ur, ->, bend left=10,truecolor]
      \arrow[ur,"{\myfontsize{not}}" description, <-, bend right=10,falsecolor]
      \arrow[dl, <->,compactcolor, dashed]
      \arrow[dr, <->,compactcolor, dashed]
      & \\%
      \text{EIS}
      \arrow[rr, <->, truecolor]
      \arrow[uu, -> ,truecolor]
      && \text{CA}
      \arrow[uu, ->, compactcolor, dashed]
    \end{tikzcd}
  \end{DIFnomarkup}
  \caption{Relationships between different convergence properties
    assuming continuously differential dynamics $f$. Note the
    abbreviations: IS for (global asymptotic) \emph{Incremental
      Stability}, EIS for (global) \emph{Exponential Incremental
      Stability}, CD for (uniform global) \emph{Convergent
      Dynamics}, ECD for (uniform global) \emph{Exponential
      Convergent Dynamics}, and CA for (global) \emph{Contraction
      Analysis}.  The blue, dashed implications do not hold in
    general, but for the case of a compact (and invariant) state space
    $\X$.}\label{Convergence commutative diagram}
\end{figure}

\section*{Appendix}

As several proofs in this appendix rely on the same modification of a converse
Lyapunov result originally due to Jiang and Wang\cite{JiWa02-SCL},
we state the modification here explicitly for the convenience of the
reader.

\begin{definition}[cf.~\cite{JiWa02-SCL}]
  Let $\mathcal{A}$ be a closed, not necessarily compact, positively
  invariant set of~\eqref{system no input}. System~\eqref{system no
    input} is \emph{uniformly global asymptotically stable} (UGAS)
  with respect to $\mathcal{A}$ if there exists a class-$\KL$ function $\beta$ such that
  \begin{equation}
    \label{eq:8}
    |\phi(k;k_{0},\xi_{0})|_{\mathcal{A}} \leq \beta(|\xi_{0}|_{\mathcal{A}}, k-k_{0})
  \end{equation}
  for all $k\geq k_{0}$ and all $\xi_{0}\in\Rn$, where 
  $$
  |\xi|_{\mathcal{A}} \coloneqq \inf_{\eta\in\Rn}|\eta-\xi|
  $$
  denotes the distance of $\xi$ to $\mathcal{A}$.    
\end{definition}

Similarly, the notion of uniform global \emph{exponential} stability
(UGES) is defined by replacing estimate~\eqref{eq:8} with the
exponential bound
\begin{equation}
  \label{eq:9}
  |\phi(k;k_{0},\xi_{0})|_{\mathcal{A}} \leq  \kappa|\xi_{0}|_{\mathcal{A}}\lambda^{- (k-k_0)},
\end{equation}
for some constants $\kappa\geq1$ and $\lambda>1$.

\begin{theorem}[Jiang \& Wang 2002]
  \label{thm:converse-Lyapunov-UGAS}
  Assume system~\eqref{system no input} is UGAS with respect to the
  closed set $\mathcal{A}$. Then there exist a smooth function
  $V\colon\Z\times\Rn\to\R_{\geq0}$ and functions
  $\alpha_{1},\alpha_{2}, \alpha_{3} \in\Kinf$, such that for all
  $k\in\Z$ and all $x\in\Rn$,
  \begin{equation}
    \label{eq:4}
    \alpha_{1}(|x|_{\mathcal{A}})\leq V(k,x) \leq \alpha_{2}(|x|_{\mathcal{A}})
  \end{equation}
  and
  \begin{equation}
    \label{eq:5}
    V\big(k+1,f(k,x)\big) - V(k,x) \leq -\alpha_{3}(|x|_{\mathcal{A}}).
  \end{equation}
  Conversely, if there exist a continuous function
  $V\colon\Z\times\Rn\to\R_{\geq0}$, functions
  $\alpha_{1},\alpha_{2} \in\Kinf$, and a positive definite function
  $\alpha_{3}\colon\R_{\geq0}\to\R_{\geq0}$, such that \eqref{eq:4}--\eqref{eq:5}
  hold, then system~\eqref{system no input} is UGAS with respect to
  the closed set $\mathcal{A}$.
\end{theorem}

The original result in~\cite[Theorem 1]{JiWa02-SCL} asserts the
existence of a function $V$ defined on $\Z_{\geq 0}$
satisfying~\eqref{eq:4} and, for a positive definite function
$\alpha_{3}$, \eqref{eq:5}. In contrast, the formulation above
yields a function $V$ defined on all of $\Z$. That the function
$\alpha_{3}$ can indeed be chosen to be of class-$\Kinf$ instead of
merely positive definite is already proven in~\cite[Lemma
2.8]{JiWa02-SCL}.

It is thus no surprise that the proof of the above result follows
the original very closely.

\begin{proof}
  Start by assuming that system~\eqref{system no input} is UGAS with
  respect to the closed set $\mathcal{A}$.  Applying Sontag's Lemma
  on $\KL$-estimates \cite[Proposition 7]{Sont98-SCL}, given
  $\beta \in \mathcal{K}\mathcal{L}$, there exist
  $\rho_1,\rho^{-1} \in \mathcal{K_\infty}$ such that
  \begin{equation}
    \beta(|\xi_{0}|_{\mathcal{A}},k-k_0)\leq \rho^{-1}(\rho_1(|\xi_{0}|_{\mathcal{A}})e^{-(k-k_0)})
  \end{equation}
  for all $ \xi_{0}\in\R^n$ and $k,k_0\in\Z$ such that $k\geq
  k_0$. Hence,
  \begin{equation}  
    \rho(|\phi(k;k_0,\xi_{0})|_{\mathcal{A}})\leq \rho_1(|\xi_{0}|_{\mathcal{A}})e^{-(k-k_0)}. \label{converse: Sontag Lemma conseq}
  \end{equation}
  Define $V\colon \Z\times\Rn\rightarrow\R_{\geq0}$ by
  \begin{align}
    V(k_0,\xi_{0}) \coloneqq \sum_{k = k_0}^{\infty}\rho(|\phi(k;k_0,\xi_{0})|_{\mathcal{A}}).\label{converse: construct V}
  \end{align}
  It is true that for all $k_0\in\Z$ and $\xi_{0}\in\R^n$
  \begin{align}
    \rho(|\xi_{0}|_{\mathcal{A}})& =\rho(|\phi(k_0;k_0,\xi_{0})|_{\mathcal{A}}) \leq V(k_0,\xi_{0})\nonumber\\ 
                                 &\leq \sum_{k = k_0}^{\infty}\rho_1(|\xi_{0}|_{\mathcal{A}})e^{-(k-k_0)}\leq \frac{e}{e-1} \rho_1(|\xi_{0}|)
  \end{align}
  where the second inequality follows from \eqref{converse: Sontag
    Lemma conseq} and \eqref{converse: construct V}. This means that
  the series defined in \eqref{converse: construct V} is
  convergent. Moreover, for each $k_0\in \Z$, $\rho$ is a continuous
  function in $\xi_{0}$, thus, $V$ is continuous in $\xi_{0}$. 

  Define $\alpha_1 \coloneqq \rho$ and
  $\alpha_2(s)\coloneqq \frac{e}{e-1} \rho_1(s)$ for all
  $s\in\R_{\geq 0}$. Observe that $\alpha_{1}$ and $\alpha_{2}$ are
  both of class-$\Kinf$.  We immediately see that $V$ satisfies
  condition~\eqref{eq:4}.

  It is only left to prove that $V$ satisfies
  condition~\eqref{eq:5}. Indeed, for arbitrary $k_{0}\in\Z$ and
  $\xi_{0}\in\Rn$ we have
  \begin{align*}
    V(k_0+1,f(k_0,\xi_{0})) &= \sum_{k = k_0+1}^{\infty}\rho(|\phi(k;k_0+1,f(k_0,\xi_{0}))|_{\mathcal{A}})\\
                            &= \sum_{k = k_0+1}^{\infty}\rho(|\phi(k;k_0+1,f(k_0,\xi_{0}))|_{\mathcal{A}}) \\
                            & \quad + \rho(|\phi(k_0;k_0,\xi_{0})|_{\mathcal{A}}) -\rho(|\phi(k_0;k_0,\xi_{0})|_{\mathcal{A}})\\
                            &= \sum_{k = k_0}^{\infty}\rho(|\phi(k;k_0,\xi_{0})|) -\rho(|\phi(k_0;k_0,\xi_{0})|_{\mathcal{A}})\\
                            &= V(k_0,\xi_{0}) - \rho(|\xi_{0}|_{\mathcal{A}}).
  \end{align*}
  We conclude that the continuous function $V$
  satisfies~\eqref{eq:5} with $\alpha_{3}\coloneqq\rho$.  Lastly, an
  application of \cite[Lemma~2.7]{JiWa02-SCL} allows us to replace
  the continuous function $V$ by a smooth one (which also modifies
  the $\alpha_{i}$, $i=1,2,3$).

  The converse direction is standard and the same as in
  \cite[Section~4.3.3]{JiWa02-SCL} and thus omitted.
\end{proof}

\begin{corollary}
  \label{corollary:converse-Lyapunov-UGES}
  If system~\eqref{system no input} is UGES with respect to the
  closed set $\mathcal{A}$, then there exist a smooth function
  $V\colon\Z\times\Rn\to\R_{\geq0}$ and constants
  $c_{1},c_{2}, c_{3}>0$, such that for all
  $k\in\Z$ and all $x\in\Rn$,
  \begin{equation}
    \label{eq:6}
    c_{1}|x|_{\mathcal{A}}^{2}\leq V(k,x) \leq c_{2}|x|_{\mathcal{A}}^{2}
  \end{equation}
  and
  \begin{equation}
    \label{eq:7}
    V\big(k+1,f(k,x)\big) - V(k,x) \leq -c_{3}|x|_{\mathcal{A}}^{2}.
  \end{equation}
  Conversely, the existence of a continuous function
  $V\colon\Z\times\Rn\to\R_{\geq0}$ and positive constants
  $c_{1},c_{2}, c_{3}$ satisfying~\eqref{eq:6}--\eqref{eq:7} is
  sufficient for system~\eqref{system no input} to be UGES with
  respect to the closed set $\mathcal{A}$.
\end{corollary}

\begin{proof}
  This result follows as in the previous proof by observing that in the
  additional step of using Sontag's Lemma on $\KL$-estimates is not
  necessary due to the exponential estimate~\eqref{eq:9}, which is equivalent to
  \begin{equation*}
    |\phi(k;k_{0},\xi_{0})|_{\mathcal{A}}^{2} \leq  \kappa^{2}|\xi_{0}|_{\mathcal{A}}^{2}\big(\lambda^{2}\big)^{- (k-k_0)}.
    \tag*{\qedhere}      
  \end{equation*}    
\end{proof}

With these two results at hand, we can now turn to proving the main
results in this paper.

\subsection*{Proof of Theorem~\ref{incre Lyapunov characterization}} \label{proof of incre Lyapunov charac}
Consider the augmented system
\begin{equation}
  \begin{aligned}
    \begin{cases} 
      x_1(k+1)=f(k,x_1(k)) \\ 
      x_2(k+1)=f(k,x_2(k)) 
    \end{cases}
  \end{aligned}\label{incre augmented system}
\end{equation}
as in \cite{Ange02-TAC}.  The \textit{diagonal} is the set  $\Delta\coloneqq \{[x^T,x^T]^T\in R^{2n}\colon x\in \R^n\}$. Let $z=\big[\begin{smallmatrix}x_1 \\ x_2\end{smallmatrix}\big]\in \R^{2n}$, $x_1,x_2\in\R^n$, then the distance from $z$ to the diagonal is given by
\begin{align}
  |z|_\Delta\coloneqq \inf_{w\in \Delta} |w-z|=\frac{1}{\sqrt{2}}|x_1-x_2|,\label{copy system relation}
\end{align}
where the equality is shown in \cite{Ange02-TAC}. Denote $F\Big(k,\big[\begin{smallmatrix}x_1 \\ x_2\end{smallmatrix}\big]\Big)\coloneqq  \Big[\begin{smallmatrix}f(k,x_1(k)) \\ f(k,x_2(k))\end{smallmatrix}\Big]$ and rewrite~\eqref{incre augmented system} as
\begin{align}
  z(k+1)=F(k,z(k)),\label{incre augmented system 1}
\end{align}
where $z=\big[\begin{smallmatrix}x_1 \\ x_2\end{smallmatrix}\big]\in \R^{2n}$ and $k\in\Z$.

Appealing to the relationship~\eqref{copy system relation}, we see
that system~\eqref{system no input} is uniformly globally
asymptotically incrementally stable if and only if system~\eqref{incre
  augmented system 1} is uniformly globally asymptotically stable
with respect to the diagonal set $\Delta$.

By application of Theorem~\ref{thm:converse-Lyapunov-UGAS},
system~\eqref{incre augmented system 1} admits a smooth, time-varying
Lyapunov function
$V\colon \Z\times\mathbb{R}^{2n}\rightarrow\mathbb{R}_{\geq 0}$ for
which there exist functions
$\alpha_1,\alpha_2, \alpha_3 \in \mathcal{K_\infty}$ such that
\begin{align}
  &~~\alpha_1(|z|_\Delta) \leq V(k,z)\leq \alpha_2(|z|_\Delta),\label{inceremental augmented Lyapunov function 1}\\
  &V\Big(k,F(k,z)\Big)-~V(k,z) \leq -\alpha_3(|z|_\Delta)\label{inceremental augmented Lyapunov function 2}
\end{align}
hold for all $z \in \mathbb{R}^{2n}$ and $k\in \Z$.  Using~\eqref{copy
  system relation}, it is clear that~\eqref{inceremental augmented
  Lyapunov function 1}--\eqref{inceremental augmented Lyapunov
  function 2} are correspondingly equivalent to~\eqref{inceremental
  Lyapunov function 1}--\eqref{inceremental Lyapunov function 2}.

Again using Theorem~\ref{thm:converse-Lyapunov-UGAS}, we conclude that
system~\eqref{system no input} is uniformly globally asymptotically
incrementally stable if and only if system~\eqref{system no input}
admits a Lyapunov function satisfying~\eqref{inceremental Lyapunov
  function 1}--\eqref{inceremental Lyapunov function 2}. $\halmos$

\subsection*{Proof of Theorem~\ref{conv Lyapunov characterization}} \label{proof of conv Lyapunov charac}
Assume that system~\eqref{system no input} is uniformly globally
convergent as in Definition~\ref{convergent dynamics def}, so that
there exists a solution $\bar x(k)$ bounded for all $k\in\Z$; i.e.,
$\sup_{k\in\Z}|\bar x(k)| <\infty$.

After a change of coordinates $z(k)=x(k)-\bar x(k)$ to the original system, we obtain a new system
\begin{equation*}
  z(k+1)=f\big(x(k)\big)-f\big(\bar x(k)\big)=f\big(z(k)+\bar x(k)\big)-f\big(\bar x(k)\big)
  \eqqcolon g\big(k,z(k)\big).
\end{equation*}
Applying Theorem~\ref{thm:converse-Lyapunov-UGAS} to this new system yields a smooth
function $W\colon \Z\times\mathbb{R}^n\rightarrow\mathbb{R}_{\geq 0}$
and functions $\alpha_1,\alpha_2, \alpha_3 \in \mathcal{K_\infty}$
such that
\begin{align}
  &\alpha_1(|z|) \leq W(k,z)\leq \alpha_2(|z|),\text{ and} \label{eq:2}\\
  &W\big(k+1,g(k,z)\big)-W(k,z) \leq -\alpha_3(|z|),\label{eq:3}
\end{align}
hold for all $z \in \Rn$ and $k\in \Z$. Reverting to the original
coordinates and defining $V(k,x)\coloneqq W\big(k,x-\bar x(k)\big)$, we
obtain~\eqref{convergent Lyapunov function 1}--\eqref{convergent
  Lyapunov function 2} from \eqref{eq:2}--\eqref{eq:3}.
Moreover,
\begin{align*}
  0\leq V(k,0)\leq \alpha_2(|\bar x(k)|) \leq \alpha_2\Big(\sup_{k\in\Z}|\bar x(k)|\Big)\eqqcolon c<\infty
\end{align*}
establishes~\eqref{convergent Lyapunov function 3}.  This completes
the proof of the first statement of the theorem.

Conversely, if~\eqref{convergent Lyapunov function
  1}--\eqref{convergent Lyapunov function 3} hold, then with the same
coordinate change and by applying the forward result in
Theorem~\ref{thm:converse-Lyapunov-UGAS}, we see that
system~\eqref{system no input} is uniformly globally asymptotically
stable with respect to the trajectory $\bar x(k)$. Thus, it is only
left to show that $\bar x(k)$ is bounded and unique. Indeed,
\begin{align*}
  |\bar x(k)|\leq \alpha_1^{-1}(V(k,0))\leq \alpha_1^{-1}(c)<\infty
\end{align*}
shows that $\bar x(k)$ is bounded. Lastly, uniqueness of $\bar x(k)$
follows from Remark~\ref{unique of bar x}.

Replacing every invocation of Theorem~\ref{thm:converse-Lyapunov-UGAS}
in this proof by an invocation of
Corollary~\ref{corollary:converse-Lyapunov-UGES} proves the statement
about uniform global exponential convergence.  $\halmos$

\subsection*{Proof of Theorem~\ref{cont Lyapunov characterization}} \label{proof of cont Lyapunov charac}	

The proof proceeds by demonstrating the equivalence of an exponential incremental Lyapunov function
and exponential incremental stability and then appealing to Theorem~\ref{equivalent contraction vs
  incremental}.

Similar to the proof of Theorem~\ref{incre Lyapunov characterization},
consider the augmented system~\eqref{incre augmented system} (also,
its shorthand notation~\eqref{incre augmented system 1}). 
Since~\eqref{copy system relation} holds, system~\eqref{system no
  input} is uniformly globally exponentially incrementally stable if
and only if the diagonal set $\Delta$ is uniformly globally
exponentially stable with respect to system~\eqref{incre augmented
  system 1}.

Applying Corollary~\ref{corollary:converse-Lyapunov-UGES}, the closed set
$\Delta$ is uniformly globally exponentially stable with respect to
system~\eqref{incre augmented system 1} if and only if
system~\eqref{incre augmented system 1} admits a smooth,
time-varying Lyapunov function
$V\colon  \Z\times\mathbb{R}^{2n}\rightarrow\mathbb{R}_{\geq 0}$ for which
there exist constants $c_1,c_2, c_3 \in \R_{>0}$ such that
\begin{align}
  &c_1|z|_\Delta^{2} \leq V(k,z)\leq c_2|z|_\Delta^{2},\label{inceremental expo augmented Lyapunov function 1}\\
  &V\big(k,F(k,z)\big)-~V(k,z) \leq -c_3|z|_\Delta^{2}\label{inceremental expo augmented Lyapunov function 2}
\end{align}
hold for all $z \in \mathbb{R}^{2n}$ and $k\in \Z$. Using~\eqref{copy
  system relation}, it is clear that~\eqref{inceremental expo
  augmented Lyapunov function 1}--\eqref{inceremental expo augmented
  Lyapunov function 2} are correspondingly equivalent
to~\eqref{contraction Lyapunov function 1}--\eqref{contraction
  Lyapunov function 2}.  Therefore, we conclude that
system~\eqref{system no input} admits an (exponential incremental)
Lyapunov function~\eqref{contraction Lyapunov function
  1}--\eqref{contraction Lyapunov function 2} if and only if
system~\eqref{system no input} is uniformly globally exponentially
incrementally stable.

Applying Theorem~\ref{equivalent contraction vs
  incremental} yields that system~\eqref{system no input} is uniformly
globally exponentially incrementally stable if and only if
system~\eqref{system no input} is globally contracting.  Note that the
proof of Theorem~\ref{equivalent contraction vs incremental} does not
employ any results prior to Theorem~\ref{cont Lyapunov
  characterization} or Theorem~\ref{cont Lyapunov characterization}
itself.
This completes the proof.$\halmos$

\subsection*{Proof of Theorem~\ref{convergent to incremental}}	\label{Proof of convergence implies incremental}
The proof follows that of \cite[Theorem~8]{RWM13-SCL}. Denote the diameter of $\X$ as $d_\X\coloneqq \max_{x,y\in\X}|x-y|$. 
Fix any $\xi_1,\xi_2\in\X$. Applying the triangle inequality, we see that
\begin{align}
  &|\phi(k;k_0,\xi_1)-\phi(k;k_0,\xi_2)|\nn\\
  &\leq |\phi(k;k_0,\xi_1)-\bar x(k)|+|\phi(k;k_0,\xi_1)-\bar x(k)|\nn\\
  &\leq  2\beta(d_\X,k-k_0)\label{Proof of convergent implies incremental 1}
\end{align}	
for all $x\in\X$ and $k,k_0\in \Z$ such that $k\geq k_0$. Let $L\in\R_{>0}$ be the maximum Lipschitz constant for $f$ on $\X$; i.e.,
\begin{align}
  |f(k_0,\xi_1)-f(k_0,\xi_2)|\leq L|\xi_1-\xi_2|\label{Proof of convergent implies incremental 2}
\end{align}
for all $\xi_1,\xi_2\in\X$. It immediately follows from~\eqref{Proof of convergent implies incremental 2} that
\begin{align}
  |\phi(k;k_0,\xi_1)-\phi(k;k_0,\xi_2)| \leq L^{k-k_0}|\xi_1-\xi_2|. \label{Proof of convergent implies incremental 3}
\end{align}
Combining~\eqref{Proof of convergent implies incremental 1} and~\eqref{Proof of convergent implies incremental 2}, we see that
\begin{equation*}
  |\phi(k;k_0,\xi_1)-\phi(k;k_0,\xi_2)|
  \leq \min\big\{L^{k-k_0}|\xi_1-\xi_2|,2\beta(d_\X,k-k_0)\big\}.
\end{equation*}
From there, we can obtain a function $\hat\beta\in \mathcal{K}\mathcal{L}$ such that
\begin{align*}
  |\phi(k;k_0,\xi_1)-\phi(k;k_0,\xi_2)| \leq\hat\beta (|\xi_1-\xi_2|,k-k_0)\nn
\end{align*}
for all $\xi_1,\xi_2\in\X$ and $k,k_0\in \Z$ such that $k\geq k_0$.
Therefore, system~\eqref{system no input} is uniformly asymptotically incrementally stable on $\X$.

With only minor rewording of the proof we establish the assertion about
exponential incremental stability.  $\halmos$

\subsection*{Proof of Theorem~\ref{Theorem Yakubovich lemma application}}
\label{proof of Yakubovich lemma application}	

Suppose system~\eqref{system no input} is uniformly globally
asymptotically incrementally stable, then by applying
Theorem~\ref{incre Lyapunov characterization}, there exists a function
$V$ satisfying~\eqref{inceremental Lyapunov function
  1}--\eqref{inceremental Lyapunov function 2}.

To derive the existence of a bounded solution $\bar x(k)$, we use a
discrete-time version from \cite[Lemma 2]{PaWo08_ACC} of
Yakubovich's~Lemma~\cite[Lemma~2]{Yaku64_ARC} which states if there
exists a compact and positively invariant set $\X$ of
system~\eqref{system no input}, then there exists a solution
$\bar x(k)$ of system~\eqref{system no input} defined on $\Z$ and
satisfying $\bar x(k)\in \X$ for all $k\in\Z$. Thus, we see that
\begin{align*}
  \alpha_1(|x-\bar x(k)|) &\leq \strut V(k,x,\bar x(k))\leq \alpha_2(|x-\bar x(k)|)\\
  \intertext{and}
  V\Big(k+1,f(k,x),f(k,\bar x(k))\Big)&-V(k,x,\bar x(k)) \leq -\alpha_3(|x-\bar x(k)|)\
\end{align*}
for all $x\in\R^n$ and $k\in\Z$.  Moreover, since $\bar x(k)$ is bounded, we have
\begin{align}
  V(k,0,\bar x(k))\leq  \alpha_2(|0-\bar x(k)|) < \infty
\end{align}
for all $k\in\Z$.

Hence, the function $W(k,x)\coloneqq V(k,x,\bar x(k))$ satisfies
conditions~\eqref{convergent Lyapunov function 1}--\eqref{convergent
  Lyapunov function 3}. Thus, by applying Theorem~\ref{conv Lyapunov
  characterization}, we conclude that system~\eqref{system no input}
is uniformly globally convergent.  $\halmos$

\subsection*{Proof of Theorem~\ref{Demidovich condition theorem}}

Consider any two initial conditions $\xi_1,\xi_2\in\R^n$. Define a function $\Phi\colon [0,1]\rightarrow\R $ by
\begin{align}
  \Phi (s)\coloneqq \big(f(k,\xi_1) - f(k,\xi_2)\big)^TP f\left(k,s\xi_1+(1-s)\xi_2\right).\label{convex combination 1}
\end{align}
Then, $\Phi$ satisfies
\begin{align}
&\Phi(1) - \Phi(0) =\left(f(k,\xi_1) - f(k,\xi_2)\right)^TP  \left(f(k,\xi_1) - f(k,\xi_2)\right).\label{Demidovich condition -1} 
\end{align} 
Applying the mean value theorem, there exists an $\bar s\in[0,1]$ such that
\begin{multline}
  \label{Demidovich condition 0}
  \Phi(1)-\Phi(0) = \frac{d}{ds}\Phi(\bar s)\\
  = \big(f(k,\xi_1) - f(k,\xi_2)\big)^TP\frac{\partial f}{\partial x}\left(k,\bar s\xi_1+(1-\bar s)\xi_2\right)(\xi_1-\xi_2).
\end{multline}
Denote $\check\xi\coloneqq \bar s \xi_1+(1-\bar s)\xi_2$.
From \eqref{Demidovich condition -1}, \eqref{Demidovich condition 0}, we obtain
\begin{multline}
  \label{Demidovich condition 1.5}
  \big(f(k,\xi_1) - f(k,\xi_2)\big)^TP  \big(f(k,\xi_1) -f(k,\xi_2)\big)  \\
  =\big(f(k,\xi_1) - f(k,\xi_2)\big)^TP\frac{\partial f}{\partial x}(k,\check \xi)(\xi_1-\xi_2)
\end{multline}
We make the following claim:
\begin{claim} For all $\xi_1,\xi_2\in\R^n$ and $k\in\Z$, the following holds
\begin{align}
  &\left(f(k,\xi_1) - f(k,\xi_2)\right)^TP  \left(f(k,\xi_1) -f(k,\xi_2)\right)\nn\\
  &\leq (\xi_1-\xi_2)^T \left(\frac{\partial f}{\partial x}(k,\check \xi)^T P \frac{\partial f}{\partial x}(k,\check \xi)\right)(\xi_1-\xi_2). \label{Demidovich condition 2}
\end{align}
\label{Demidovich Lemma claim}
\end{claim}
\textit{Proof of Claim~\ref{Demidovich Lemma claim}:} To simplify the notations, we denote $a = (f(k,\xi_1) - f(k,\xi_2))$ and $b = \left(\frac{\partial f}{\partial x}(k,\check \xi)\right)(\xi_1-\xi_2)$, we want to show $a^TPa\leq b^TPb$. From \eqref{Demidovich condition 1.5}, we have $a^TPa =a^TPb$. Thus,
\begin{align}
b^TPb-a^TPa &= b^TPb-a^TPa+2(a^TPa-a^TPb)\nn\\
&= (b^TPb-b^TPa) + (a^TPa - a^TPb)\nn\\
&= (a-b)^TP(a-b)  \geq 0
\end{align}
completes the proof of Claim~\ref{Demidovich Lemma claim}. \hfill $\square$

As $J(k,x)\preceq 0$, we see that
\begin{equation*}
  (\xi_1-\xi_2)^T \left(\frac{\partial f}{\partial x}(k,\check \xi)^T P \frac{\partial f}{\partial x}(k,\check \xi)\right)(\xi_1-\xi_2) 
  \leq \rho(\xi_1-\xi_2)^T P(\xi_1-\xi_2)
\end{equation*}
which, with Claim~\ref{Demidovich Lemma claim}, implies that 
\begin{equation}
  \big(f(k,\xi_1) - f(k,\xi_2)\big)^TP  \big(f(k,\xi_1) -f(k,\xi_2)\big)
   \leq \rho(\xi_1-\xi_2)^T P(\xi_1-\xi_2).\label{Demidovich condition 3}
\end{equation}
Define a Lyapunov function candidate $V(k,\xi_1,\xi_2)\coloneqq (\xi_1-\xi_2)^T P(\xi_1-\xi_2)$. Since $P$ is positive definite, $V$ satisfies condition~\eqref{inceremental Lyapunov function 1}. Next, using~\eqref{Demidovich condition 3}, 
\begin{equation*}
  V(k+1,f(k,\xi_1),f(k,\xi_2))-V(k,\xi_1,\xi_2)
  \leq -(1-\rho)V(k,\xi_1,\xi_2),
\end{equation*}
for all $\xi_1,\xi_2\in\R^n$ and $k\in\Z$. This implies that $V$ satisfies condition~\eqref{inceremental Lyapunov function 2}.
Therefore, by virtue of Theorem~\ref{incre Lyapunov characterization}, system~\eqref{system no input} is uniformly globally asymptotically incrementally stable.

It is straightforward to see that the function $V$ defined above also satisfies~\eqref{contraction Lyapunov function 1} and~\eqref{contraction Lyapunov function 2} 
of Theorem~\ref{cont Lyapunov characterization}. Hence, system~\eqref{system no input} is globally contracting.

The additional condition~\eqref{Demidovich bounded condition} together with~\eqref{Demidovich condition 3} implies that system~\eqref{system no input} is uniformly globally (exponentially) convergent by applying \cite[Theorem~1]{PaWo12_TAC}.
$\halmos$

\subsection*{Proof of Theorem~\ref{equivalent contraction vs incremental}}\label{proof of equivalent contraction vs incremental}	

\noindent ``\!\!$\implies$\!\!''\quad
Assume system~\eqref{system no input} is globally uniformly uniformly exponentially incrementally stable. Let $\Big[\begin{smallmatrix}\phi(k;k_0,\xi) \\  \phi_{\delta}(k;k_0,\xi_\delta,\xi)\end{smallmatrix}\Big]$
denote the solution of system~\eqref{original dynamics}--\eqref{displacement dynamics} for the initial condition 
$\Big[\begin{smallmatrix}\xi \\ \xi_\delta\end{smallmatrix}\Big]\in \R^{2n}$, whereby we note that the solutions to the dynamics~\eqref{displacement dynamics} (denoted by $\phi_{\delta}$) depend on the dynamics~\eqref{original dynamics}, so that $\phi_{\delta}$ depends on a third parameter $\xi$ identifying a particular reference trajectory of~\eqref{original dynamics}.
Parametrize the straight line segment connecting $\xi$ and $\xi+\xi_\delta$ at time $k_0$ by a function $\gamma_{k_0}\colon  [0,1]\rightarrow\R^n$  given by
\begin{align}
  \gamma_{k_0} (s)\coloneqq \xi+s\xi_\delta, \label{convex combination}
\end{align}
where $s\in[0,1]$. At time $k\geq k_0$, we denote the parametrized curve initiated from segment~\eqref{convex combination} by
\begin{align*}
  \gamma_k(s)\coloneqq \phi(k;k_0,\xi+s\xi_\delta)=\phi(k;k_0,\gamma_{k_0}(s)).
\end{align*}
We make the following claim:

\begin{claim} The trajectory
  $\Big[\begin{smallmatrix}\phi(k;k_0,\xi) \\  \phi_\delta(k;k_0,\xi_\delta,\xi)\end{smallmatrix}\Big]$ of system~\eqref{original dynamics}--\eqref{displacement dynamics} is identical to $\Big[\begin{smallmatrix}\phi(k;k_0,\xi) \\ \frac{d}{ds}\phi(k;k_0,\gamma_{k_0}(s))\end{smallmatrix}\Big]$; i.e.,~$ \phi_\delta(k;k_0,\xi_\delta,\xi)=\frac{d}{ds}\phi(k;k_0,\gamma_{k_0}(s))$ for all $k,k_0\in \Z$ such that $k\geq k_0$.
\end{claim}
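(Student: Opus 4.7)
The plan is to prove the claim by induction on $k \ge k_0$, exploiting the fact that the displacement dynamics~\eqref{displacement dynamics} is precisely the linearization of~\eqref{original dynamics} along its own solution. The standing hypothesis that $f$ is continuously differentiable in $x$ is what allows the chain rule to be applied at each step.

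\textbf{Base case.} At $k=k_0$, the initial condition of~\eqref{displacement dynamics} gives $\phi_\delta(k_0;k_0,\xi_\delta,\xi)=\xi_\delta$. On the other hand, since $\phi(k_0;k_0,\gamma_{k_0}(s))=\gamma_{k_0}(s)=\xi+s\xi_\delta$, differentiation with respect to $s$ yields $\tfrac{d}{ds}\phi(k_0;k_0,\gamma_{k_0}(s))=\xi_\delta$. So the two sides agree at $k=k_0$.

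\textbf{Inductive step.} Suppose the equality holds at time $k$. From~\eqref{displacement dynamics},
\begin{align*}
\phi_\delta(k+1;k_0,\xi_\delta,\xi)
 &= \tfrac{\partial f}{\partial x}\bigl(k,\phi(k;k_0,\xi)\bigr)\,\phi_\delta(k;k_0,\xi_\delta,\xi).
\end{align*}
On the other hand, applying~\eqref{original dynamics} at the initial condition $\gamma_{k_0}(s)$ and then differentiating with respect to $s$ via the chain rule (which is justified by $f\in C^1$ in $x$) gives
\begin{align*}
\tfrac{d}{ds}\phi(k+1;k_0,\gamma_{k_0}(s))
 &= \tfrac{d}{ds}\,f\bigl(k,\phi(k;k_0,\gamma_{k_0}(s))\bigr)\\
 &= \tfrac{\partial f}{\partial x}\bigl(k,\phi(k;k_0,\gamma_{k_0}(s))\bigr)\,\tfrac{d}{ds}\phi(k;k_0,\gamma_{k_0}(s)).
\end{align*}
Evaluating along the reference trajectory, $\phi(k;k_0,\gamma_{k_0}(s))$ coincides with $\phi(k;k_0,\xi)$ at the relevant base point (so the Jacobian factors match), and the inductive hypothesis equates the remaining factors. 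The two recursions therefore produce the same value at $k+1$.

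\textbf{Expected obstacle.} The only subtlety is notational bookkeeping: one must keep the parameter $s$ (indexing the curve of initial conditions) distinct from the time index $k$ and from the reference state $\xi$ in $\phi_\delta(\cdot;\cdot,\xi_\delta,\xi)$, and verify that the Jacobian evaluated along the reference trajectory is the same object that appears in~\eqref{displacement dynamics}. Once this is set up carefully, the induction is immediate. No deeper analytic fact is needed beyond the $C^1$ assumption on $f$ and the chain rule; this is the discrete-time counterpart of the classical statement that the variational equation computes the derivative of the flow with respect to initial conditions.
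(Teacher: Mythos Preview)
Your argument is correct and essentially the same as the paper's. The paper applies the chain rule to show that $\tfrac{d}{ds}\phi(k;k_0,\gamma_{k_0}(s))$ satisfies the displacement dynamics~\eqref{displacement dynamics} and then checks that the initial conditions agree; your induction on $k$ is simply the explicit unrolling of that ``same recursion, same initial data $\Rightarrow$ same trajectory'' step, with the same chain-rule computation at its core.
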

\begin{proof}
  By the chain rule,
  \begin{align*}
    \frac{d}{ds}f(k,\phi(k;k_0,\gamma_{k_0}(s))=\frac{\partial f}{\partial x}(k,x)\frac{d }{ds}\phi(k;k_0,\gamma_{k_0}(s)),
  \end{align*}
  which implies that $\frac{d}{ds}\phi(k;k_0,\gamma_{k_0}(s))$ satisfies the displacement\linebreak[4] dynamics~\eqref{displacement dynamics}. Thus, it is only left to prove that the initial conditions of both trajectories are the same; i.e., $\frac{d}{ds}\phi(k_0;k_0,\gamma_{k_0}(s))=\xi_\delta$. Note that $\frac{d}{ds}\phi(k_0;k_0,\gamma_{k_0}(s))= \frac{d}{ds}\gamma_{k_0} (s) $. Hence, by taking the derivative on both sides of~\eqref{convex combination} with respect to $s$, we obtain the desired result.
\end{proof}

For any sufficiently small $\epsilon>0$, Definition~\ref{exponential incremental stable def} yields $k\geq1$, $\lambda>1$ so that
\begin{equation}
  |\phi(k;k_0,\gamma_{k_0}(s+\epsilon))-\phi(k;k_0,\gamma_{k_0}(s))|
  \leq \kappa |\gamma_{k_0}(s+\epsilon)-\gamma_{k_0}(s)|\lambda^{-(k-k_0)}.\label{exp incre to cont 1}
\end{equation}
Dividing both sides of~\eqref{exp incre to cont 1} by $\epsilon$ then sending $\epsilon \rightarrow 0$, we obtain
\begin{align}
  |\phi_\delta(k;k_0,\xi_\delta,\xi)|=\Big|\frac{d}{ds}\phi(k;k_0,\gamma_{k_0}(s))\Big|\leq \kappa |\xi_\delta| \lambda^{-(k-k_0)}.\label{exp incre to cont 2}
\end{align}
Thus, from the global exponential incremental stability of the\linebreak[4] subsystem~\eqref{original dynamics}, by considering~\eqref{original dynamics}--\eqref{displacement dynamics}, we have established the global exponential stability of the subsystem~\eqref{displacement dynamics}. As this system is linear, its global exponential stability is the same as global exponential incremental stability.

In what follows, we will construct a nonsingular matrix $\Theta(k,x)$ and, hence construct $F(k,x)$ such that~\eqref{contraction condition} is satisfied for all $x \in \mathbb{R}^n$ and $k\in \Z$.

To this end observe that the transfer matrix $\Phi(k,k_{0};\xi)$ of the
linear displacement dynamics~\eqref{displacement dynamics}, which satisfies
$\Phi(k,k_{0};\xi)\xi_{\delta}=\phi(k;k_{0},\xi_{\delta},\xi)$
for all $\xi_{\delta}\in\Rn$ and all $k\geq k_{0}$, satisfies the
exponential bound
\begin{equation}
  \|\Phi(k,k_{0};\xi)\|\leq \kappa\lambda^{-(k-k_{0})}
  \label{eq:1}
\end{equation}
(with $\kappa\geq1$, $\lambda>1$) for $k\geq k_{0}$
\emph{independently} of the initial condition $\xi$ of the
reference trajectory generated by~\eqref{original dynamics}. From here
we may follow, mutatis mutandis, the proof of
\cite[Theorem~23.3]{Rugh95} and define an $n\times n$ matrix
$$
Q(k,\xi)\coloneqq \sum_{j=k}^{\infty}\big(\Phi(j,k;\xi)\big)^{T}\Phi(j,k;\xi)
$$
and noting that, due to~\eqref{eq:1},
$\|Q(k,\xi)\|\leq \frac{\kappa^{2}\lambda^{2}}{\lambda^{2}-1}$
independently of $k$ and $\xi$, so $Q(k,\xi)$ is well
defined. Following the remainder of said proof, one establishes, mutatis mutandis,  that
for all $x \in \mathbb{R}^n$ and $k\in \Z$,
\begin{align}
  &\eta I\preceq Q(k,x) \preceq \rho I,\label{Linear Lyapunov func 1}\\
  \frac{\partial f}{\partial x} (k,x)^T&Q(k+1,x)\frac{\partial f}{\partial x} (k,x)-Q(k,x)\preceq -\nu I\label{Linear Lyapunov func 2}
\end{align}
where $\eta,\rho,$ and $\nu$ are positive constants.

Applying the Cholesky factorization on the uniformly positive definite matrix $Q(k,x)$, there exists a nonsingular matrix $\Theta(k,x)$ such that,  $Q(k,x)=\Theta(k,x)^T\Theta(k,x)$. With this construction of $\Theta(k,x)$, condition~\eqref{bounds on Theta} is automatically satisfied by appealing to~\eqref{Linear Lyapunov func 1}.
Now, multiplying each side of~\eqref{Linear Lyapunov func 2} by $\Theta(k,x)^{-T}$ from the left and $\Theta(k,x)^{-1}$ from the right, we obtain
\begin{multline*}
  \Theta(k,x)^{-T}\frac{\partial f}{\partial x} (k,x)^T\\
   \times \Theta(k+1,x)^T\Theta(k+1,x)\frac{\partial f}{\partial x} (k,x)\Theta(k,x)^{-1}\\
  -\Theta(k,x)^{-T}Q(k,x)\Theta(k,x)^{-1}\\
  \preceq -\nu \Theta(k,x)^{-T}\Theta(k,x)^{-1}
\end{multline*}
which,  with~\eqref{define Jacobian matrix}, implies
\begin{align}
  F(k,x)^TF(k,x) -I\preceq -\nu \Big(\Theta(k,x)\Theta(k,x)^T\Big)^{-1}.\label{construct contraction lyapunov function 0}
\end{align}
Next, since $Q(k,x)\preceq \rho I$ by~\eqref{Linear Lyapunov func 1}, for any $s\in\R^n$ we have
\begin{equation*}
  \big(\Theta(k,x)^{-1}s\big)^T Q(k,x)\big(\Theta(k,x)^{-1}s\big) 
   \leq \rho\big(\Theta(k,x)^{-1}s\big)^T\big(\Theta(k,x)^{-1}s\big).
\end{equation*}
Straightforward manipulations result in
\begin{align*}
  \frac{1}{\rho} s^Ts \leq s^T\big(\Theta(k,x)\Theta(k,x)^T\big)^{-1}s.
\end{align*}
Thus, we arrive at
\begin{align}
  -\nu\big(\Theta(k,x)\Theta(k,x)^T\big)^{-1} \preceq -\frac{\nu}{\rho} I.\label{construct contraction lyapunov function 1}
\end{align}
Combining~\eqref{construct contraction lyapunov function 0} with~\eqref{construct contraction lyapunov function 1}, we see that
\begin{align*}
  F(k,x)^TF(k,x) -I\preceq -\frac{\nu}{\rho}I.
\end{align*}
Therefore, condition~\eqref{contraction condition}
is satisfied for this construction of $F(k,x)$ with $\mu\coloneqq \frac{\nu}{\rho}$, in other words, system~\eqref{system no input} is uniformly globally contracting.

\noindent ``\!\!$\impliedby$\!\!''\quad
Now, assume that~\eqref{system no input} is uniformly globally contracting. Define matrix $Q(k,x)\coloneqq \Theta(k,x)^T\Theta(k,x)$. By the definition of $\Theta(k,x)$, condition~\eqref{bounds on Theta} implies $Q(k,x)$ is uniformly positive definite; i.e., there exists $\eta>0$, $\rho>0$ so that
\begin{align}
  \eta I\preceq Q(k,x)\preceq \rho I.\label{Conveser Linear Lyapunov func 1}
\end{align}
Multiplying each side of \eqref{contraction condition} with $\Theta(k,x)^T$ from the right and $\Theta(k,x)$ from the left, then expanding by using~\eqref{define Jacobian matrix}, we have
\begin{multline*}
  \Theta(k,x)^T \Big(\Theta(k,x)^{-T}\frac{\partial f}{\partial x} (k,x)^T\Theta(k+1,x)^T\\
   \times \Theta(k+1,x)\frac{\partial f}{\partial x} (k,x)\Theta(k,x)^{-1}\Big)\Theta(k,x)\\
  -\Theta(k,x)^T\Theta(k,x) \preceq -\mu\Theta(k,x)^T\Theta(k,x).
\end{multline*}
After straightforward simplifications, we see that
\begin{equation}
  \label{Conveser Linear Lyapunov func 2}
  \frac{\partial f}{\partial x} (k,x)^TQ(k+1,x)\frac{\partial f}{\partial x} (k,x)-Q(k,x)
                                        \preceq -\mu Q(k,x) \preceq -\frac{\eta}{\mu}I
\end{equation}
where the second matrix inequality follows directly from the lower bound of~\eqref{Conveser Linear Lyapunov func 1}. Thus, from~\eqref{Conveser Linear Lyapunov func 1} and~\eqref{Conveser Linear Lyapunov func 2}, applying \cite[Theorem~23.3]{Rugh95}, we conclude that the linear time-varying subsystem~\eqref{displacement dynamics} is uniformly exponentially stable. Furthermore, from the first inequality of~\eqref{Conveser Linear Lyapunov func 2}, we have
\begin{align}
  \frac{\partial f}{\partial x} (k,x)^T&Q(k+1,x)\frac{\partial f}{\partial x} (k,x)\preceq \beta Q(k,x)\label{Conveser Linear Lyapunov func 3}
\end{align}
for some $\beta \in (0,1)$.

Pick any  $\xi_1,\xi_2\in\R^n$. Consider a straight line segment connecting $\xi_1,\xi_2$ and parametrized by a function $\gamma_{k_0}\colon  [0,1]\rightarrow\R^n$, at $k_0$, given by
\begin{align}
  \gamma_{k_0} (s)\coloneqq s\xi_1+(1-s)\xi_2, \label{curve at k=k_0}
\end{align}
where $s\in[0,1]$. The length of this segment at $k=k_0$ is $l_0=|\xi_1-\xi_2|$. Then, for any $k> k_0$,
\begin{align}
  &\gamma_{k} (s)\coloneqq \phi(k;k_0,\gamma_{k_0}(s))\label{curve at k=k}
\end{align}
is a curve connecting $\phi(k;k_0,\xi_1)$ to $\phi(k;k_0,\xi_2)$ parametrized by $s\in[0,1]$.  The length of the curve defined in~\eqref{curve at k=k} at time $k$ is given by
\begin{align*}
  l(k)=\int_{0}^{1}\sqrt{\frac{d}{ds}\gamma_k(s)^TQ(k,\gamma_k(s)) \frac{d}{ds}\gamma_k(s)}ds.
\end{align*} 
Applying the chain rule to $\gamma_{k+1}(s)$ for $s\in[0,1]$ yields
\begin{align*}
  &\frac{d}{ds}\gamma_{k+1}(s)=\frac{d}{ds}\phi(k+1;k_0,\gamma_{k_0}(s))\\
  &=\frac{d}{ds}f(k,\phi(k;k_0,\gamma_{k_0}(s)))\\
  &=\frac{\partial f}{\partial x}(k,x) \frac{d}{ds}\phi(k;k_0,\gamma_{k_0}(s))=\frac{\partial f}{\partial x}(k,x) \frac{d}{ds}\gamma_{k}(s).
\end{align*}
We see that
\begin{multline*}
  \frac{d}{ds}\gamma_{k+1}(s)^TQ(k+1,\gamma_{k+1}(s)) \frac{d}{ds}\gamma_{k+1}(s) \\
  =\Bigg(\frac{\partial f}{\partial x}(k,x) \frac{d}{ds}\gamma_{k}(s)\Bigg)^TQ(k+1,\gamma_{k+1}(s))\\
  \times\Bigg(\frac{\partial f}{\partial x}(k,x) \frac{d}{ds}\gamma_{k}(s)\Bigg)= \frac{d}{ds}\gamma_{k}(s)^T \times\\
  \Bigg[\frac{\partial f}{\partial x}(k,x)  ^TQ(k+1,\gamma_{k+1}(s))\frac{\partial f}{\partial x}(k,x)\Bigg]  \frac{d}{ds}\gamma_{k}(s)\\
  \leq \frac{d}{ds}\gamma_{k}(s)^T \beta Q(k,\gamma_k(s))\frac{d}{ds}\gamma_{k}(s),
\end{multline*}
where the final inequality follows from~\eqref{Conveser Linear Lyapunov func 3}.
Therefore,
\begin{multline*}
  l(k+1)-l(k)\\
  = \int_{0}^{1}\sqrt{\frac{d}{ds}\gamma_{k+1}(s)^TQ(k+1,\gamma_{k+1}(s)) \frac{d}{ds}\gamma_{k+1}(s)}ds\\
  -\int_{0}^{1}\sqrt{\frac{d}{ds}\gamma_{k}(s)^TQ(k,\gamma_k(s)) \frac{d}{ds}\gamma_{k}(s)}ds\\
  \leq(\sqrt{\beta}-1) l(k)
\end{multline*}
which, in turn, implies
$l(k+1)\leq  \frac{l(k)}{\lambda}$,
where $\lambda=\frac{1}{\sqrt{\beta}}>1$ since $\beta \in (0,1)$. Consequently, it is straightforward to see that
$l(k)\leq l_0\lambda^{-(k-k_0)}$
and, hence,
\begin{equation*}
  |\phi(k;k_0,\xi_1)-\phi(k;k_0,\xi_2)|\leq l(k)
  \leq l_0\lambda^{-(k-k_0)}=|\xi_1-\xi_2|\lambda^{-(k-k_0)}
\end{equation*}
for all $\xi_1,\xi_2\in\mathbb{R}^n$ and $k,k_0\in \Z$ such that $k\geq k_0$. We conclude that~\eqref{system no input} is globally exponentially incrementally stable. $\halmos$

\subsection*{Proof of Theorem~\ref{Convergence vs contraction theorem}}\label{proof of Convergence vs contraction theorem}	

The proof of Theorem~\ref{Convergence vs contraction theorem} is a consequence of Theorem~\ref{Theorem Yakubovich lemma application} and Theorem~\ref{equivalent contraction vs incremental}. In particular, by Theorem~\ref{equivalent contraction vs incremental}, we see that a globally contracting system is globally exponentially incrementally stable.  Then, by Theorem~\ref{Theorem Yakubovich lemma application}, we conclude that a globally contracting system whose state  evolves in a compact and positively invariant set, is uniformly globally convergent. $\halmos$

\end{document}